\theoremstyle{plain}
\newtheorem{thm}{Theorem}[section]
\newtheorem{prop}[thm]{Proposition}
\newtheorem{lemma}[thm]{Lemma}
\newtheorem{exam}[thm]{Example}
\newtheorem{rem}[thm]{Remark}
\newtheorem{rems}[thm]{Remarks}
\newcommand{\E}{\mathcal{E}}
\newcommand{\G}{\mathcal{G}}
\begin{document}

\title{Stabilizing Graph-dependent Switched Systems}

\author{Nikita Agarwal}
\address{Department of Mathematics, Indian Institute of Science Education and Research Bhopal, Bhopal Bypass Road, Bhauri, Bhopal 462 066, Madhya Pradesh, INDIA\\
nagarwal@iiserb.ac.in}

\begin{abstract}
We give sufficient conditions for stability of a continuous-time linear switched system consisting of finitely many subsystems. The switching between subsystems is governed by an underlying graph. The results are applicable to switched systems having some or all non-Hurwitz subsystems. We also present a slow-fast switching mechanism on subsystems comprising simple loops of underlying graph to ensure stability of the switched system.
\end{abstract}

\maketitle

\section{Introduction} \label{intro} 
We consider a continuous-time switched system which is a piecewise continuous dynamical system consisting of finitely many subsystems. The switching between subsystems is determined by a switching signal which is a piecewise constant function. The signal is represented by the admissible switching from one subsystem to another using the architecture of an underlying directed graph and the times at which these switchings take place. The system can switch from one subsystem to another if there is an edge between the corresponding vertices on the underlying graph. Such systems have been studied in~\cite{NA,IK,KS,O1,KIO,Man}. Switched systems have applications in electrical and power grid systems, where the underlying graph structure varies with time. The networks whose topology changes randomly have been studied in~\cite{AP,B3,B2,PZ,PSBS,PSB}. Synchronization in oscillator networks with varying underlying topology is discussed in~\cite{LCWLT,PJ}. We refer to an editorial by Belykh~\emph{et al.}~\cite{B1} for a review on switched systems as an evolving dynamical system and its potential applications. 

The stability of a switched system not only depends on the properties of subsystems but also on the switching signal. It is known that a switched system with all stable subsystems can be unstable for a particular switching signal. On the other hand, there are several conditions in the literature under which a switched system is stable for arbitrary signals, see Liberzon~\cite{Lib}. Using dwell time and average dwell time approach, sufficient conditions are present in the literature to ensure stability of switched system with all stable subsystems, see~\cite{NA,GC,KS,O1,KIO,Morse1}. Sufficient stability conditions for planar switched systems with all Hurwitz subsystems are discussed in~\cite{BBM}. In~\cite{HXMA}, stability results are presented for the case when all the subsystem matrices commute pairwise. Moreover, for switched systems where some unstable subsystems are present with atleast one stable subsystem, there are sufficient conditions under which the switched system can be stabilized. Stability of switched systems with both stable and unstable subsystems are discussed in~\cite{SU}, using average dwell time approach. For switched positive linear systems having both stable and unstable subsystems, stability results are given in~\cite{ZWXS}. Sufficient conditions in terms of the network topology and also using the concept of flee time from an unstable subsystem and dwell time in a stable system are given in~\cite{NA}. In their paper, using the concept of standard decomposition, a concept of simple loop dwell time is introduced to get a slow-fast switching mechanism. Of course, such systems are not stable under arbitrary signals since for a constant switching signal which keeps the system in the unstable subsystem, the switched system is unstable. \\
Further, it is also known that even when all subsystems are unstable, the switched system can be stable for some switching signal, see~\cite{Lib}. In this case, finding sufficient conditions for stability of the switched system is challenging. Most of the results present in the literature use state-dependent switching~\cite{Feron,LM,Lib,WPD,XC}. There are only a few results with respect to time-dependent switching signals which we will now discuss. \\
In 2018, Ma~\emph{et al.}~\cite{LMF} gave a sufficient condition for stability of a discrete-time switched system, which can be easily verified for linear systems. Xiang and Xiao in~\cite{WW} proposed a sufficient condition for stability of a continuous-time linear switched system using discretized Lyapunov function approach. Their condition demands that the time spent by the system in each subsystem is bounded below and above by fixed quantities. \\
In this paper, we provide a set of new sufficient conditions for stability of the switched system, which are given in Theorem~\ref{thm:main}. Our conditions are in terms of the Jordan decomposition of the subsystem matrices and the underlying graph. As in~\cite{WW}, our sufficient conditions also gives a lower bound and an upper bound on the (dwell) times that the switched system spends in each subsystem. In certain cases, we will see that there is no lower bound on the dwell time. Moreover, we provide conditions necessary for the hypothesis of our Theorem~\ref{thm:main} to be satisfied (see Remark~\ref{rem:hyp} (5) and Proposition~\ref{prop:trace}). \\
Another useful feature of our sufficient conditions, which are in terms of the spectral norm and Jordan decomposition, is that it is easy to check, in contrast to the sufficient conditions given in the existing literature~\cite{WW}, where one needs to solve a large number of matrix inequalities. For planar systems, in particular, our conditions reduce to solving certain inequalities presented in Section~\ref{sec:planar}. The Jordan decomposition technique was also used by the author in~\cite{NA} and Karabacak in~\cite{O1} for situations when all subsystems are stable or atleast one subsystem is stable. The sufficient conditions given in this paper reduce to conditions given in ~\cite{NA,O1} when all subsystems are stable.\\
The paper is organized as follows: in Section~\ref{background}, we give the necessary background material, discuss results in Section~\ref{results}, and specifically focus on planar systems in Section~\ref{sec:planar}. We give numerical examples illustrating our results in Section~\ref{sec:exam}. Finally, in Section~\ref{conclusion}, we summarize our results and discuss future directions.

\section{Background} \label{background} 
In this section, we give preliminaries on directed graphs and describe a switched linear continuous-time system whose switching is given by a finite or an infinite path on a fixed underlying graph. For a $n\times n$ matrix $M=(m_{ij})$, $\Vert M\Vert$ will denote its \textit{spectral norm}, $\rho(M)$ its \textit{spectral radius}, and $s_n(M)\geq 0$ the \textit{smallest singular value} of $M$ which is the square root of the smallest (real) eigenvalue of $M^tM$. The \textit{Frobenius norm} of $M$, denoted by $\Vert M\Vert_F$, is defined as the trace of $M^tM$, which is equal to $\sum_{i=1}^n\sum_{j=1}^n\vert m_{ij}\vert ^2$. Clearly $\Vert M\Vert_F\geq \Vert M\Vert$.

\subsection{Graph-dependent switched system}
A directed graph (or a digraph) consists of a set of vertices and directed edges from one vertex to another. For a graph $\G$ with $k$ vertices, we label them as $v_1,\dots,v_k$. The set of vertices $\{v_1,\dots,v_k\}$ is denoted by $v(\G)$. The edge set, denoted by $\mathcal{E}(\mathcal{G})$, is the collection of all ordered tuples $(i,j)$, where there is an edge from vertex $v_i$ to $v_j$, for $i,j\in\{1,\dots,k\}$. A \emph{path} in the graph $\mathcal{G}$ is a sequence of 
vertices and directed edges such that from each vertex there is an edge to the next vertex in the sequence. The number of edges describing a path $p$ is called the \emph{length of the path}, denoted by $\ell(p)$. If the sequence of vertices in the path $p$ is $v_{i_1},\dots,v_{i_{\ell(p)}}$, we will denote the path as $p=(i_1,\dots,i_{\ell(p)})$. A path whose terminal vertices are the same is called a \emph{loop}. An \emph{acyclic graph} is a graph without any loops. A loop having all distinct vertices is called a \emph{simple loop}. Every loop can be uniquely expressed as a union of simple loops, see Section~\ref{stddec} for standard decomposition algorithm. \\
Let $\G$ be a directed graph with $k$ vertices $\{v_1,\dots,v_k\}$ and no self-loops (an edge from a vertex to itself). Let $\sigma:[0,\infty)\rightarrow \{1,\dots,k\}$ be a piecewise constant right-continuous function with discontinuities $0=t_0<t_1<t_2<\dots$, such that $(\sigma(t_i),\sigma(t_{i+1}))\in\mathcal{E}(\G)$, for all $i\geq 0$. Let $\sigma_i$ denote the value of $\sigma$ in the time interval
$[t_{i-1},t_i)$, for $i\geq 1$. Thus $(\sigma_1,\dots,\sigma_m)$ is a path of length $m$ in $\G$. We call such a signal $\sigma$, a \textit{$\G$-admissible signal}. Each $\G$-admissible signal comprises of the following: switching times $(t_n)_{n\geq 1}$ and a path in $\G$ given by the sequence $(\sigma_n)_{n\geq 1}$. The collection of all $\G$-admissible signals is denoted by $\mathcal{S}_\G$. We now define a sub-class of the collection of switching signals $\mathcal{S}_\G$, which we will use in this article. Label the edges of $\G$ as $e_1,\dots,e_\ell$, where $\ell$ is the number of edges in $\G$. Define
\begin{eqnarray}\label{eq:class}
\mathcal{S}_\G(I_1,\dots,I_\ell) &=& \{\sigma\in \mathcal{S}_\G\ \vert \ \text{time $\tau_i$ spent on edge $e_i$ satisfies $\tau_i\in I_i$}\},
\end{eqnarray}
where $I_i$ is an open sub-interval of $(0,\infty)$, for each $i=1,\dots,\ell$. 

\noindent Let $A_1,\dots,A_k$ be $n\times n$ matrices with real entries. We call a matrix \textit{stable} (or Hurwitz) if all its eigenvalues have negative real part, and a matrix is called \textit{unstable} if it has at least one eigenvalue with positive real part. A matrix which is not Hurwitz will be called \textit{non-Hurwitz} throughout the paper. 

\noindent For $\sigma\in \mathcal{S}_\mathcal{G}$, consider the switched linear system in $\mathbb{R}^n$ given by 
\begin{equation}\label{main} 
	x^\prime(t) = A_{\sigma(t)}x(t), \ \ t\geq 0.
\end{equation} 
The system~(\ref{main}) is called \textit{a switched system with a $\G$-admissible signal $\sigma\in \mathcal{S}_\G$}. For each $i\geq 1$, the linear system $x^\prime(t) = A_{\sigma_i}x(t)$, $t\in [t_{i-1},t_i)$, 
is called a subsystem of~(\ref{main}). This subsystem is known as \textit{stable} (respectively, unstable, non-Hurwitz) if $A_{\sigma_i}$ is a stable (respectively, unstable, non-Hurwitz) matrix. The matrices $A_1,\dots,A_k$ are called \textit{subsystem matrices of the switched system}.  We will now discuss the stability notions for switched systems.\\
A graph-dependent switched system~(\ref{main}) with $\sigma\in\mathcal{S}_{\G}$ is \textit{globally exponentially stable} if there exist positive constants $\alpha$ and $\beta$ such that for all initial conditions $x(0)\in \mathbb{R}^n$, $\Vert x(t)\Vert \leq \alpha e^{-\beta t}\Vert x(0)\Vert$. In this paper, we will discuss sufficient conditions which will guarantee global exponential stability of the switched system. Since we will only discuss global exponential stability, we will just call it \textit{stability} for convenience. We will need the following lemmas in this paper.

\begin{lemma}\label{lemma_1}
If $A$ is a $n\times n$ invertible matrix then $\Vert A^{-1}\Vert = \dfrac{1}{s_n(A)}$.
\end{lemma}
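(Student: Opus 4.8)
The plan is to reduce everything to the eigenvalues of the symmetric positive definite matrices $A^tA$ and $AA^t$, using the characterization of the spectral norm as the largest singular value. Recall that for any real matrix $M$ the spectral norm satisfies $\Vert M\Vert^2 = \rho(M^tM)$, the largest eigenvalue of the positive semidefinite matrix $M^tM$, while $s_n(M)^2$ is by definition its smallest eigenvalue. Since $A$ is invertible, $A^tA$ is positive definite, so all of its eigenvalues are strictly positive; in particular $s_n(A)>0$ and the right-hand side of the claimed identity is well defined.

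First I would apply this to $A^{-1}$: $\Vert A^{-1}\Vert^2 = \rho\big((A^{-1})^t A^{-1}\big) = \rho\big((A^t)^{-1}A^{-1}\big) = \rho\big((AA^t)^{-1}\big)$, using $(A^{-1})^t = (A^t)^{-1}$ and $(A^t)^{-1}A^{-1} = (AA^t)^{-1}$. Next, since $AA^t$ is invertible, the eigenvalues of $(AA^t)^{-1}$ are exactly the reciprocals of those of $AA^t$, so the largest eigenvalue of $(AA^t)^{-1}$ is the reciprocal of the smallest eigenvalue of $AA^t$, giving $\rho\big((AA^t)^{-1}\big) = 1/\lambda_{\min}(AA^t)$.

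The final step is to identify $\lambda_{\min}(AA^t)$ with $s_n(A)^2 = \lambda_{\min}(A^tA)$. This rests on the standard fact that $AA^t$ and $A^tA$ share the same characteristic polynomial (and in particular the same nonzero eigenvalues), so their smallest eigenvalues coincide. Combining the three displays yields $\Vert A^{-1}\Vert^2 = 1/s_n(A)^2$, and taking positive square roots gives $\Vert A^{-1}\Vert = 1/s_n(A)$.

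The computations are entirely routine; the only point requiring care is the role of invertibility, both to guarantee $s_n(A)>0$ (so no division by zero occurs) and to justify passing from $(A^t)^{-1}A^{-1}$ to $(AA^t)^{-1}$. As an alternative that avoids even these small checks, one could argue directly via a singular value decomposition $A = U\Sigma V^t$, whence $A^{-1} = V\Sigma^{-1}U^t$ shows the singular values of $A^{-1}$ to be the reciprocals of those of $A$, so that its largest singular value is $1/s_n(A)$. I expect no genuine obstacle along either route.
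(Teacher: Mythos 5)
Your proof is correct, but it takes a different route from the paper's. The paper argues variationally in one line: it uses $s_n(A)=\inf_{x\ne 0}\Vert Ax\Vert/\Vert x\Vert$, inverts the infimum into a supremum, and substitutes $y=Ax$ to recognize $\sup_{y\ne 0}\Vert A^{-1}y\Vert/\Vert y\Vert=\Vert A^{-1}\Vert$. You instead work at the level of the Gram matrices: you compute $\Vert A^{-1}\Vert^2=\rho\bigl((AA^t)^{-1}\bigr)=1/\lambda_{\min}(AA^t)$ and then invoke the isospectrality of $AA^t$ and $A^tA$ to identify $\lambda_{\min}(AA^t)$ with $s_n(A)^2$; your SVD variant is essentially the same computation packaged differently. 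Both are sound. The paper's argument is shorter and needs no spectral facts beyond the inf characterization of $s_n$ (though that characterization itself rests on the Rayleigh quotient), and it transparently shows where invertibility enters (the substitution $y=Ax$ is a bijection of $\mathbb{R}^n\setminus\{0\}$). Your argument is more computational but makes explicit that the singular values of $A^{-1}$ are the reciprocals of those of $A$, which is a slightly stronger and independently useful fact; the price is the extra step identifying the spectra of $AA^t$ and $A^tA$, which the paper's substitution sidesteps entirely.
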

\begin{proof}
Since $s_n(A)= \inf_{x\ne 0} \dfrac{\Vert Ax\Vert }{\Vert x\Vert}$, we get 
\[
\dfrac{1}{s_n(A)}= \sup_{x\ne 0} \dfrac{\Vert x\Vert}{\Vert Ax\Vert}= \sup_{y=Ax\ne 0} \dfrac{\Vert A^{-1}y\Vert}{\Vert y\Vert} = \Vert A^{-1}\Vert.
\]
\end{proof}

\begin{lemma}\label{lemma_2}
If $A$ and $B$ are invertible matrices of size $n$ with $\Vert A\Vert\geq 1$, $\Vert B\Vert\geq 1$, and $\Vert AB\Vert <1$, then $s_n(A)<1$ and $s_n(B)<1$.
\end{lemma}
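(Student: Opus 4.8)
The plan is to prove the two inequalities separately, each by combining the variational descriptions of the quantities involved with Lemma~\ref{lemma_1}. Recall that $\Vert M\Vert=\sup_{x\ne 0}\Vert Mx\Vert/\Vert x\Vert$ and $s_n(M)=\inf_{x\ne 0}\Vert Mx\Vert/\Vert x\Vert$, so in particular $\Vert My\Vert\ge s_n(M)\Vert y\Vert$ for every $y$, and the supremum defining $\Vert M\Vert$ is attained at some unit vector. The key external input will be Lemma~\ref{lemma_1}, which rewrites $\Vert B^{-1}\Vert$ as $1/s_n(B)$.

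For the bound $s_n(A)<1$, I would exploit the hypothesis $\Vert B\Vert\ge 1$. Choose a unit vector $v$ with $\Vert Bv\Vert=\Vert B\Vert\ge 1$. Then $\Vert AB\Vert\ge\Vert ABv\Vert\ge s_n(A)\Vert Bv\Vert\ge s_n(A)$, where the middle step applies the inf-characterization to $\Vert A(Bv)\Vert$. Hence $s_n(A)\le\Vert AB\Vert<1$.

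For the bound $s_n(B)<1$, I would instead use $\Vert A\Vert\ge 1$ together with the invertibility of $B$. Pick a unit vector $u$ with $\Vert Au\Vert=\Vert A\Vert\ge 1$ and set $x=B^{-1}u$, so that $ABx=Au$ and therefore $\Vert ABx\Vert\ge 1$. On the other hand $\Vert x\Vert=\Vert B^{-1}u\Vert\le\Vert B^{-1}\Vert=1/s_n(B)$ by Lemma~\ref{lemma_1}. Combining, $\Vert AB\Vert\ge\Vert ABx\Vert/\Vert x\Vert\ge s_n(B)$, so $s_n(B)\le\Vert AB\Vert<1$.

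There is no serious obstacle here; the only subtlety is that, because the product is $AB$ rather than $BA$, the two conclusions are not interchangeable and require mirror-image arguments—the first reads $AB$ left-to-right through a maximizing vector of $B$, while the second pulls a maximizing vector of $A$ back through $B^{-1}$, which is precisely where Lemma~\ref{lemma_1} enters. The hypotheses $\Vert A\Vert\ge 1$ and $\Vert B\Vert\ge 1$ are each used exactly once, and the strict inequalities in the conclusions come for free from the strict inequality $\Vert AB\Vert<1$.
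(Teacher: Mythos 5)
Your proof is correct and follows essentially the same route as the paper, which simply asserts the inequality $\Vert AB\Vert\geq \max\{s_n(A)\Vert B\Vert,\, s_n(B)\Vert A\Vert\}$ and reads off both conclusions; your two mirror-image arguments are just the detailed derivations of the two halves of that max. Nothing is missing.
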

\begin{proof}
The result follows from the inequality
\[
1>\Vert AB\Vert\geq \max\{s_n(A)\Vert B\Vert, s_n(B)\Vert A\Vert\}\geq \max\{s_n(A), s_n(B)\}.
\]
\end{proof}

\subsection{Standard Decomposition Algorithm}  \label{stddec}
Let $\G$ be a directed graph with vertex set $\{v_1,\dots,v_k\}$. Consider a signal $\sigma\in \mathcal{S}(\G)$, with associated switching times $(t_n)_{n\geq 1}$ and an infinite path $(\sigma_n)_{n\geq 1}$ in $\G$, with edges $e_n=(v_{\sigma_n},v_{\sigma_{n+1}})$, $n\geq 1$.  In~\cite{NA}, a standard decomposition algorithm of paths $(\sigma_1,\dots,\sigma_n)$, $n\geq 1$, into simple loops and an indecomposable path was introduced which we describe now.\\
\noindent \textbf{Step 1}: Let $p_0=(\sigma_1,\sigma_2,\dots\sigma_n)$ be the path with edges $e_1, e_2,\dots, e_{n-1}$, and let $i(p_0)$ be the set consisting of subscripts $j$ of all $e_j$ that comprise $p_0$. Let $r_2\in i(p_0)$ be the minimum index such that $\sigma_{r_2}=\sigma_{j+1}$ for some $j<r_2$ in the index set $i(p_0)$ (that is, the initial vertex of $e_{r_2}$ is the terminal vertex of $e_j$). Let $r_1\in i(p_0)$ be such that $r_1<r_2$ and $\sigma_{r_1+1}=\sigma_{r_2}$. If such a pair does not exist, then the path $p_0$ is indecomposable and the algorithm stops. Otherwise, we proceed to Step 2. It is easy to see that the subpath $p^0=(\sigma_{r_1+1},\dots,\sigma_{r_2})$ of $p_0$ with edges $e_{r_1+1},\dots,e_{r_2-1}$ is a simple loop in $\G$.\\\\
\noindent \textbf{Step 2}: Let $p_1=(\sigma_1,\dots,\sigma_{r_1+1},\sigma_{r_2+1},\dots,\sigma_n)$ be the path obtained by deleting the edges of $p^0$ from $p_0$. If $p_1$ is indecomposable, the algorithm stops, otherwise repeat Step 1 by replacing $p_0$ by $p_1$. \\~\\
Using this algorithm, $\sigma^{(n)}$ can be decomposed into simple loops and an indecomposable path. Such a decomposition is called the \textit{standard decomposition}. Note that the steps of this decomposition can be used to express any loop in $\G$ into a union of simple loops. 

\begin{exam}
Let $p_0=(1,2,3,2,3,1,2)$ be a path in $\G$ (see Figure~\ref{fig:stddec}). It can be checked that $p^0=(\sigma_2,\sigma_3,\sigma_4)=(2,3,2)$, thus $p_1=(\sigma_1,\sigma_2,\sigma_5,\sigma_6,\sigma_7)=(1,2,3,1,2)$. Then $p^1=(\sigma_1,\sigma_2,\sigma_5,\sigma_6)=(1,2,3,1)$, thus $p_2=(\sigma_1,\sigma_7)=(1,2)$ which is an indecomposable path. Thus $p_0$ is a union of simple loops $p^0$, $p^1$ and an indecomposable path $p_2$.

\begin{figure}[h!]
\centering
\includegraphics[width=6cm,height=5cm]{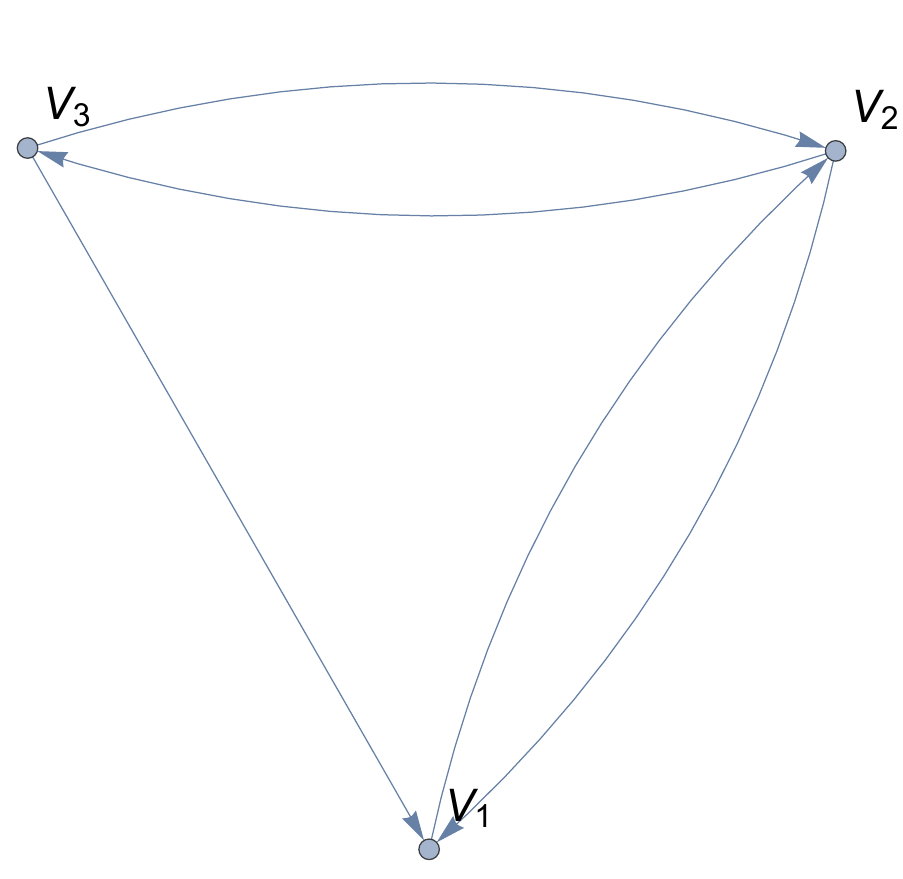}
\caption{The graph $\G$.}
\label{fig:stddec}
\end{figure}


\end{exam}

\section{Results} \label{results}
Consider the switched system~(\ref{main}) and let $\lambda_i$ be the real part of the eigenvalue of $A_i$ with maximum real part, for each $i=1,\dots,k$. We assume the following hypotheses (H), see Remark~\ref{rem_assump}.\\

\noindent $(\mathbf{H})$ The switching signal $\sigma\in \mathcal{S}(\G)$ has infinitely many discontinuities $(t_n)_{n\geq 0}$ and $t_n\rightarrow \infty$ as $n\rightarrow \infty$. 
%

\begin{rem}\label{rem_assump}
If (H) is not satisfied then there exists $T\geq 0$ such that $\sigma(t)=j$, for all $t\geq T$, for some $j\in\{1,\dots,k\}$. Hence the switched system is stable if and only if the switched system with constant switching signal with value $j$ is stable. Moreover, (H) implies that the graph $\G$ is not acyclic, that is, it has atleast one loop.
\end{rem}

\noindent For the remainder of the paper, we will consider stability issue of the switched system~(\ref{main}) with $\G$, $\sigma$, and $A_1,\dots,A_k$ satisfying (H). By (H), it is clear that zeno behavior does not occur in the switched systems under consideration. We now state and prove our main result giving sufficient conditions for stability of switched system.

\begin{thm}\label{thm:main}
If there exist invertible matrices $P_1,\dots,P_k$ such that $P_iJ_iP_i^{-1}$ is a Jordan decomposition of $A_i$, for $i=1,\dots,k$, and for each $(r,s)\in \mathcal{E}(\G)$, there exists $\eta_{(r,s)}>0$ such that
\begin{eqnarray}\label{ineq:hyp}
\Vert P_{(r,s)}e^{\eta_{(r,s)} J_r}\Vert &<& 1,
\end{eqnarray}
where $P_{(r,s)}=P_s^{-1}P_r$, then the switched system~(\ref{main}) is stable for every switching signal $\sigma\in \mathcal{S}_\G(I_1,\dots,I_\ell)$, where $I_i$ is some open interval in $(0,\infty)$ containing $\eta_{(r,s)}$ with $e_i=(r,s)$, $i=1,\dots,\ell$. 
\end{thm}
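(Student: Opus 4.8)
The plan is to track the state only at the switching times and exploit the multiplicative structure of the flow. Fix $\sigma\in\mathcal{S}_\G(I_1,\dots,I_\ell)$ with switching times $0=t_0<t_1<\cdots$ and dwell times $\tau_i=t_i-t_{i-1}$, and recall that the solution of~\eqref{main} is continuous across each switch. Writing $e^{\tau A_{\sigma_i}}=P_{\sigma_i}e^{\tau J_{\sigma_i}}P_{\sigma_i}^{-1}$ and composing the subsystem flows, I obtain for every $m\geq 1$
\[
x(t_m)=\Phi_m\,x(0),\qquad \Phi_m=P_{\sigma_m}e^{\tau_m J_{\sigma_m}}P_{\sigma_m}^{-1}\cdots P_{\sigma_1}e^{\tau_1 J_{\sigma_1}}P_{\sigma_1}^{-1}.
\]

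The crux is to regroup this product so that the hypothesis~\eqref{ineq:hyp} applies. Since the edge traversed at time $t_i$ is $(\sigma_i,\sigma_{i+1})$ and $P_{(\sigma_i,\sigma_{i+1})}=P_{\sigma_{i+1}}^{-1}P_{\sigma_i}$, each adjacent pair $P_{\sigma_{i+1}}^{-1}P_{\sigma_i}$ telescopes into an edge factor, so that
\[
\Phi_m=P_{\sigma_m}e^{\tau_m J_{\sigma_m}}\Big(\prod_{i=m-1}^{1}P_{(\sigma_i,\sigma_{i+1})}e^{\tau_i J_{\sigma_i}}\Big)P_{\sigma_1}^{-1},
\]
the product being taken in decreasing order of $i$. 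Each bracketed factor is precisely of the form occurring in~\eqref{ineq:hyp}, with $(r,s)=(\sigma_i,\sigma_{i+1})$ and dwell $\tau_i\in I_{(\sigma_i,\sigma_{i+1})}$.

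The analytic step that makes~\eqref{ineq:hyp} usable is a continuity and compactness argument. For each edge $e=(r,s)$ the map $\tau\mapsto\Vert P_{(r,s)}e^{\tau J_r}\Vert$ is continuous and is strictly less than $1$ at $\tau=\eta_{(r,s)}$, so I choose the open interval $I_e\ni\eta_{(r,s)}$ small enough that its closure is a compact subset of $(0,\infty)$ on which $\Vert P_{(r,s)}e^{\tau J_r}\Vert\leq c_e<1$; setting $c=\max_e c_e<1$ (a finite maximum over the edges) gives a single contraction factor valid on every traversal. Submultiplicativity of the spectral norm then yields $\Vert\Phi_m\Vert\leq\Vert P_{\sigma_m}\Vert\,\Vert e^{\tau_m J_{\sigma_m}}\Vert\,c^{m-1}\,\Vert P_{\sigma_1}^{-1}\Vert\leq C\,c^{m-1}$, where $C$ absorbs the finitely many uniform bounds $\max_i\Vert P_i\Vert$, $\max_i\Vert P_i^{-1}\Vert$, and $\max_{e=(r,s)}\sup_{\tau\in\overline{I_e}}\Vert e^{\tau J_r}\Vert$, all finite because each $\overline{I_e}$ is compact. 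Hence the state decays geometrically in the number $m$ of switches.

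Finally I would convert decay per switch into decay per unit time. Choosing each $I_e$ bounded provides a uniform upper bound $\tau_{\max}$ on all dwell times, whence $t_m\leq m\tau_{\max}$ and therefore $m\geq t/\tau_{\max}-1$ for $t\in[t_m,t_{m+1})$; combined with a uniform bound $K$ on $\Vert e^{sA_i}\Vert$ for $s\in[0,\tau_{\max}]$ (to control the state between consecutive switches), this gives $\Vert x(t)\Vert\leq KC\,c^{m-1}\Vert x(0)\Vert\leq\alpha e^{-\beta t}\Vert x(0)\Vert$ with $\beta=-\ln c/\tau_{\max}>0$, which is exactly global exponential stability. I expect the main obstacle to be the algebraic telescoping that brings~\eqref{ineq:hyp} into play; a secondary subtlety is that~\eqref{ineq:hyp} asserts only a strict inequality at the single point $\eta_{(r,s)}$, so the freedom to choose the intervals $I_e$ together with continuity is essential to upgrade it to a uniform contraction, and notably only an upper bound on the dwell times—not a lower bound—is needed for the time conversion.
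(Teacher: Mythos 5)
Your proposal is correct and follows essentially the same route as the paper's proof: the same telescoping of the product of subsystem flows into edge factors $P_{(\sigma_i,\sigma_{i+1})}e^{\tau_i J_{\sigma_i}}$, a uniform contraction constant over the chosen intervals, and absorption of the boundary factors $P_{\sigma_m}e^{\tau J_{\sigma_m}}$ and $P_{\sigma_1}^{-1}$ into a single constant. Your explicit compactness argument for the uniform bound $c<1$ and the conversion from per-switch to per-time decay are slightly more careful than the paper's wording, but the substance is identical.
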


\begin{rems}\label{rem:hyp}
1) It should be noted that if $\Vert P_{(r,s)}\Vert\geq 1$, then the left end point of $I_{i}$ is strictly positive, where $e_i=(r,s)$. \\
2) Note that the inequalities~(\ref{ineq:hyp}) imply invertibility of $I-P_{(r,s)}e^{\eta_{(r,s)} J_r}$, for each $(r,s)\in\mathcal{E}(\G)$.\\
3) Since $\G$ has a closed loop by (H) and Remark~\ref{rem_assump}(1), there exist atleast one $(i,j)\in\mathcal{E}(\G)$ such that $\Vert P_{(i,j)}\Vert\geq 1$: If $(i_1,\dots,i_p,i_1)$ is a loop in $\G$, then since
\begin{eqnarray*}
I &=& (P_{i_1}^{-1}P_{i_p})(P_{i_p}^{-1}P_{i_{p-1}})\dots (P_{i_3}^{-1}P_{i_2})(P_{i_2}^{-1}P_{i_1})\\
&=& P_{(i_p,i_1)}P_{(i_{p-1},i_p)}\dots P_{(i_2,i_3)}P_{(i_1,i_2)},
\end{eqnarray*}
we get $1\leq \Vert P_{(i_p,i_1)}\Vert \Vert P_{(i_{p-1},i_p)}\Vert \dots \Vert P_{(i_2,i_3)}\Vert\Vert P_{(i_1,i_2)}\Vert$.\\
Let $P_1,\dots,P_n$ satisfy the hypothesis in the statement of Theorem~\ref{thm:main}, and let 
\begin{eqnarray}\label{eq:e1e2}
\mathcal{E}_1(\G)&=&\{(i,j)\in\mathcal{E}(\G) \ \vert\ \Vert P_{j}^{-1}P_i\Vert\geq 1\}\ne \emptyset, \text{ and} \nonumber \\
\mathcal{E}_2(\G)&=&\mathcal{E}(\G)\setminus \mathcal{E}_1(\G). 
\end{eqnarray}
4) For all $e_i=(r,s)\in\mathcal{E}_2(\G)$, since $\Vert P_{s}^{-1}P_r\Vert<1$, there exists $\eta_{(r,s)}>0$ such that $\Vert P_{s}^{-1}P_re^{J_r\eta_{(r,s)}}\Vert<1$. Hence the hypothesis of Theorem~\ref{thm:main} is satisfied. \\
In particular, when $A_r$ is diagonalizable over $\mathbb{C}$, we have $\Vert P_{(r,s)}e^{\eta_{(r,s)} J_r}\Vert \leq \Vert P_{(r,s)}\Vert e^{\eta_{(r,s)} \lambda_r} <1$ provided 
\[
0<\eta_{(r,s)}<-\dfrac{\ln\Vert P_{(r,s)}\Vert}{\lambda_r},
\]
when $\lambda_r>0$. In this case, we can take $I_{i}=\left(0,-\dfrac{\ln\Vert P_{(r,s)}\Vert}{\lambda_r}\right)$. For $e_i=(r,s)\in\mathcal{E}_2(\G)$, if $\lambda_r\leq 0$, we can take $I_{i}=\left(0,\infty\right)$. \\
Thus for a given set of matrices $P_1,\dots,P_k$, it is enough to check hypothesis of Theorem~\ref{thm:main} for $(r,s)\in\mathcal{E}_1(\G)$.  \\
5) Let $P_1,\dots,P_n$ satisfy the hypothesis in the statement of Theorem~\ref{thm:main}, then by Lemma~\ref{lemma_2}, for $(r,s)\in\mathcal{E}_1(\G)$, $s_n(e^{J_r})<1$ since $\Vert P_{(r,s)}\Vert\geq 1$. Moreover, if $A_r$ is diagonalizable over $\mathbb{C}$, $s_n(e^{J_r})<1$ holds if and only if $A_r$ has an eigenvalue with negative real part. Note that this is not true for non-diagonalizable case: the smallest singular value $s_2(e^{J})$ of the non-diagonalizable matrix $A=J=\begin{pmatrix}
a&1\\0&a
\end{pmatrix}$ is less than one for all values of $a<0.48$. Hence, for the hypothesis in the statement of Theorem~\ref{thm:main} to be satisfied, for $(r,s)\in\mathcal{E}_1(\G)$ with $A_r$ diagonalizable over $\mathbb{C}$, $A_r$ must have an eigenvalue to the left of the imaginary axis. \\
6) If $A_r$ was stable matrix, that is, $\lambda_r<0$, then for each $0>\lambda_r^*>\lambda_r$, there exists $\beta>0$ such that $\Vert P_{(r,s)}e^{t J_r}\Vert\leq \beta\Vert P_{(r,s)}\Vert e^{t\lambda_r^*}$ will be less than 1 for all $t>0$ satisfying 
\[
t > -\dfrac{\ln \left(\beta \Vert P_{(r,s)}\Vert\right)}{\lambda_r^*},
\]
for any choice of $P_r, P_s$. See Agarwal~\cite{NA}, Karabacak~\cite{O1} and references therein for related bounds on dwell time in case of all stable subsystems. Further we refer to~\cite{NA} for stability of switched system having atleast one stable subsystem and when the subgraph of $\G$ corresponding to unstable subsystems is acyclic. 
\end{rems}

\begin{proof} (Proof of Theorem~\ref{thm:main})
If for each $(r,s)\in \mathcal{E}(\G)$, there exists $\eta_{(r,s)}>0$ such that
\[
\Vert P_{(r,s)}e^{\eta_{(r,s)} J_r}\Vert <1,
\]
then for all $i=1,\dots,\ell$, there exist a bounded interval $I_{i}\subseteq(0,\infty)$ containing $\eta_{(r,s)}$ such that 
\[
\Vert P_{(r,s)}e^{\eta J_r}\Vert <1,
\]
for all $\eta\in I_{i}$, where $e_i=(r,s)$. We show that the switched system~(\ref{main}) is stable for all $\sigma\in \mathcal{S}_\G(I_1,\dots,I_\ell)$.\\
For $t\in [t_{n-1},t_n)$, the solution of the switched system~(\ref{main}) with initial condition $x(0)$ is given by $x(t)=
e^{A_{\sigma_n}(t-t_{n-1})}e^{A_{\sigma_{n-1}}(t_{n-1}-t_{n-2})}\dots e^{A_{\sigma_{1}}t_{1}} x(0)$. Using Jordan decomposition $A_i=P_iJ_iP_i^{-1}$, we get
\begin{eqnarray} \label{eq:1}
\Vert x(t)\Vert &=& \Vert e^{A_{\sigma_n}(t-t_{n-1})}e^{A_{\sigma_{n-1}}(t_{n-1}-t_{n-2})}\dots e^{A_{\sigma_{1}}t_{1}} x(0)\Vert   \nonumber \\ 
&\leq &  \Vert P_{\sigma_n}e^{J_{\sigma_n}(t-t_{n-1})}\Vert \Vert P_{\sigma_1}^{-1}\Vert  \left(\prod_{j=1}^{n-1}\Vert
P_{(\sigma_{j},\sigma_{j+1})} e^{J_{\sigma_j}(t_j-t_{j-1})}\Vert\right)\Vert x(0)\Vert \nonumber \\ 
&\leq &  C a_n^\sigma \Vert x(0)\Vert, 
\end{eqnarray} 
where the constant $C>0$ is given by
\[
C = \sup\{\Vert P_se^{t J_s}\Vert \Vert P_r^{-1}\Vert \ \vert \ (t,r,s)\in \Lambda\},
\]
with $\Lambda$ is the collection of all triples $(t,r,s)$ with $r,s\in\{1,\dots,k\}$ such that there is a path (of any length) from $v_r$ to $v_s$ and $t\in I_{i}$, for edges $e_i$ originating at the vertex $v_s$, $i=1,\dots,\ell$. The constant $C$ is independent of $\sigma$ and $n$, for all $n$, and
\begin{eqnarray*}
a_n^\sigma &=& \prod_{j=1}^{n-1}\Vert
P_{(\sigma_{j},\sigma_{j+1})} e^{J_{\sigma_j}(t_j-t_{j-1})}\Vert.
\end{eqnarray*}
Each term in the product is less than $K<1$, where 
\[
K=\sup\{\Vert P_{(r,s)}e^{t J_r}\Vert \ \vert \ t\in I_{i},\ e_i=(r,s)\in\mathcal{E}(\G),\ i=1,\dots,\ell\}.
\]
Hence $a_n^\sigma\rightarrow 0$ as $n\rightarrow \infty$ (at an exponential rate). Thus the switched system~(\ref{main}) is stable for every switching signal $\sigma\in \mathcal{S}_\G(I_1,\dots,I_\ell)$.
\end{proof}

\begin{prop}\label{prop-unit_norm}
If $P_i$, $Q_i$, $J_i$, $K_i$, $i=1,\dots,k$ are matrices such that $A_i=P_iJ_iP_i^{-1}$ and $A_i=Q_iK_iQ_i^{-1}$ are Jordan decompositions of $A_i$ with $P_i$ and $Q_i$ having all columns with unit norm, then the following are equivalent:\\
1)  For each $(r,s)\in \mathcal{E}(\G)$, there exists $\eta_{(r,s)}>0$ such that 
\[
\Vert P_{(r,s)}e^{\eta_{(r,s)} J_r}\Vert <1,
\]
where $P_{(r,s)}=P_s^{-1}P_r$.\\
2) For each $(r,s)\in \mathcal{E}(\G)$, there exists $\zeta_{(r,s)}>0$ such that 
\[
\Vert Q_{(r,s)}e^{\zeta_{(r,s)} K_r}\Vert <1,
\] 
where $Q_{(r,s)}=Q_s^{-1}Q_r$.
\end{prop}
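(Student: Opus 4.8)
The plan is to prove the equivalence by reducing both conditions to a statement about how the quantity $\Vert P_{(r,s)}e^{\eta J_r}\Vert$ transforms when one passes from the Jordan basis $P_i$ to another unit-column Jordan basis $Q_i$ of the same matrix $A_i$. Since statements (1) and (2) are interchanged by the relabelling $P_i\leftrightarrow Q_i$, $J_i\leftrightarrow K_i$, $\eta\leftrightarrow\zeta$, it suffices to prove $(1)\Rightarrow(2)$. First I would reduce to the case $J_i=K_i$. As $J_i$ and $K_i$ are both Jordan forms of $A_i$, they coincide up to a permutation of Jordan blocks, say $K_i=\Pi_iJ_i\Pi_i^{-1}$ for a permutation matrix $\Pi_i$. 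Replacing $Q_i$ by $Q_i\Pi_i$ keeps $A_i=(Q_i\Pi_i)J_i(Q_i\Pi_i)^{-1}$ a Jordan decomposition with unit-norm columns (a permutation merely reorders columns), and since $\Pi_i$ is orthogonal it preserves the spectral norm, so the quantity $\Vert Q_{(r,s)}e^{\zeta K_r}\Vert$ is unchanged. Hence I may assume $K_i=J_i$ for every $i$.

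With a common Jordan form, both $P_r$ and $Q_r$ conjugate $J_r$ to $A_r$, so $D_r=P_r^{-1}Q_r$ commutes with $J_r$, and therefore with $e^{\zeta J_r}$. Writing $Q_r=P_rD_r$ yields the key identity
\[
Q_{(r,s)}e^{\zeta J_r}=D_s^{-1}P_{(r,s)}e^{\zeta J_r}D_r,
\]
so condition (2) on an edge is the condition (1)-quantity conjugated by the centralizer elements $D_r,D_s$. When $A_r$ has distinct eigenvalues the centralizer of $J_r$ consists of diagonal matrices, so each $D_i$ is diagonal; the unit-norm normalization then forces every diagonal entry to have modulus $1$, i.e.\ each $D_i$ is a diagonal unitary. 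Diagonal unitaries preserve the spectral norm under multiplication on either side, so in this case $\Vert Q_{(r,s)}e^{\eta J_r}\Vert=\Vert P_{(r,s)}e^{\eta J_r}\Vert$ and one may simply take $\zeta_{(r,s)}=\eta_{(r,s)}$. This disposes of the diagonalizable-with-distinct-spectrum situation cleanly.

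The main obstacle is the general case of repeated eigenvalues and nontrivial Jordan blocks, where the centralizer of $J_r$ is strictly larger and $D_r$ need not be unitary even after normalizing columns to unit length; here the two norms genuinely differ and a limiting choice of $\zeta$ is needed rather than an equality. I would decompose $J_r=\bigoplus_\mu J_r^{(\mu)}$ over the distinct eigenvalues, observe that $D_r$ is block-diagonal for this decomposition, and that $e^{\zeta J_r}$ acts on the $\mu$-block as $e^{\zeta\mu}e^{\zeta N_\mu}$ with $N_\mu$ nilpotent. On blocks with $\mathrm{Re}\,\mu<0$ the scalar factor $e^{\zeta\,\mathrm{Re}\,\mu}\to 0$ eventually dominates the fixed distortion $\Vert D_s^{-1}\Vert\,\Vert D_r\Vert$, so those directions become harmless as $\zeta$ grows; the delicate part is the interaction on the neutral and unstable repeated-eigenvalue blocks, on which the non-unitary part of $D_r$ is not damped by $e^{\zeta J_r}$.

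I expect this last interaction to be the crux of the argument: one must isolate the right invariant of $P_{(r,s)}$ restricted to each generalized eigenspace and show it is preserved (or at least that the sub-unit condition is preserved) under the admissible, unit-norm-respecting change of Jordan basis. This is precisely the step where the unit-norm hypothesis has to be used in an essential way rather than merely to bound $\Vert D_r\Vert$, and I would expect to spend most of the effort here; it is also the place where I would first check whether the conclusion requires an additional assumption (such as diagonalizability of the non-Hurwitz part, or the loop structure forced by $(\mathbf{H})$) for the non-damped repeated-eigenvalue blocks to be controllable.
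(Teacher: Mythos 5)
Your route is the same as the paper's: align the two Jordan forms by a block permutation (your $\Pi_i$, the paper's ``rotation'' $R_i$), which reorders columns and so preserves both the unit-norm normalization and the spectral norm, and then observe that the residual discrepancy $U_i=(P_iR_i)^{-1}Q_i$ lies in the centralizer of the Jordan form. The paper finishes in one line: it asserts that this centralizer element is \emph{unitary} because both $P_iR_i$ and $Q_i$ have unit-norm columns, concludes $\Vert P_{(r,s)}e^{\eta_{(r,s)}J_r}\Vert=\Vert Q_{(r,s)}e^{\eta_{(r,s)}K_r}\Vert$, and takes $\zeta_{(r,s)}=\eta_{(r,s)}$. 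You establish exactly this unitarity in the distinct-eigenvalue case (diagonal centralizer, unit columns force modulus-one entries) but then explicitly decline to handle repeated eigenvalues and nontrivial Jordan blocks, proposing instead a limiting argument in $\zeta$ that you do not carry out. That is a genuine gap: the proposition is stated for arbitrary $A_i$, and the case you defer is the whole content beyond the easy one; no ``damping by $e^{\zeta J_r}$'' argument appears in, or is needed for, the paper's proof, which is purely algebraic with $\zeta_{(r,s)}=\eta_{(r,s)}$.

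That said, your instinct about where the difficulty sits is sound, because the paper's one-line unitarity claim is only transparent in the semisimple, distinct-eigenvalue situation. For a single $2\times 2$ Jordan block the centralizer element is $U=\alpha I+\beta N$, giving $q_1=\alpha p_1$ and $q_2=\beta p_1+\alpha p_2$; unit norms force $\vert\alpha\vert=1$ but then only $\beta\bigl(\beta+2\alpha\langle p_1,p_2\rangle\bigr)=0$, so a nonzero $\beta$ (hence a non-unitary $U$) is admissible whenever $p_1$ and $p_2$ are not orthogonal. A similar enlargement of the centralizer occurs for a repeated eigenvalue with several blocks. So the step you flag as the crux is precisely the step the paper passes over without argument; to turn your proposal into a complete proof you would either have to justify unitarity of $U_i$ in these degenerate cases or show directly that conjugation by the admissible non-unitary centralizer elements preserves the existence of a sub-unit $\zeta_{(r,s)}$ --- neither of which you do.
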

\begin{proof}
For each $i=1,\dots,k$, since $J_i=R_iK_iR_i^t$, for some rotation matrix $R$, 
\[
P_iJ_iP_i^{-1}=P_iR_iK_iR_i^{-1}P_i^{-1}=Q_iK_iQ_i^{-1}.
\]
Hence $Q_i=P_iR_iU_i$ for some unitary matrix $U_i$. Thus for $(r,s)\in\E(\G)$,
\[
\Vert P_{(r,s)}e^{\eta_{(r,s)} J_r}\Vert = \Vert U_s^{-1}R_s^{-1}Q_{(r,s)}R_rU_re^{\eta_{(r,s)} J_r}\Vert = \Vert Q_{(r,s)}e^{\eta_{(r,s)} J_r}\Vert.
\]
Take $\zeta_{(r,s)}=\eta_{(r,s)}$.
\end{proof}

\begin{rem}
In view of Proposition~\ref{prop-unit_norm}, if the eigenvector matrices $P_1,\dots,P_k$ satisfying the hypothesis of Theorem~\ref{thm:main} have unit norm columns, then the hypothesis of the theorem are satisfied for any choice of eigenvector matrices with unit norm columns. In Example~\ref{exam:1}, we will see that the hypothesis of Theorem~\ref{thm:main} may not be satisfied by eigenvector matrices with unit norm, but may be satisfied with appropriate scaling of eigenvectors. \\
If the columns of the invertible matrices $P_1,\dots,P_k$ have unit norm, then by Proposition~\ref{prop-unit_norm}, we can fix a choice of these matrices and corresponding $J_1,\dots,J_k$ such that   $A_i=P_iJ_iP_i^{-1}$ is a Jordan decomposition of $A_i$, $i=1,\dots,k$, and then the hypothesis of Theorem~\ref{thm:main} is equivalent to finding invertible diagonal matrices $D_1,\dots,D_k$ such that 
\begin{eqnarray}\label{eq:5}
\Vert D_s^{-1}P_{(r,s)}D_r e^{J_r \eta_{(r,s)}}\Vert<1,
\end{eqnarray}
for some $\eta_{(r,s)}>0$, $(r,s)\in\mathcal{E}(\G)$.
\end{rem}

\begin{exam}\label{exam:1}
In this example, the hypothesis of Theorem~\ref{thm:main} is not satisfied if we take $P_i$ having unit norm columns, whereas inequalities~(\ref{eq:5}) are satisfied for some choice of $D_1,\dots,D_k$.\\
Consider a switched system on a unidirectional ring with two vertices as the underlying graph and with planar subsystems with  $A_1=\text{diag}(-1,1)$ and $A_1=\text{diag}(1,-2)$.
Both $A_1$ and $A_2$ are unstable. If we insist on columns of $P_i$ having unit norm, then $P_1=P_2=I$ with $J_1=A_1$ and $J_2=A_2$ (we can make this choice in view of Proposition~\ref{prop-unit_norm}).
Clearly with these choices of $P_1$ and $P_2$, the hypothesis in Theorem~\ref{thm:main} is not satisfied. But if we take 
\[
D_1=\text{diag}(e^2,e^{-3}), \ D_2=I,
\] 
inequalities~(\ref{eq:5}) are satisfied, for all $\eta_{(1,2)}\in(2,3)$ and $\eta_{(2,1)}\in(1.5,2)$.\\
This example is special because if $T_1$ is the time spent in subsystem $A_1$ and $T_2$ is the time spent in subsystem $A_2$, then the switched system is stable if $T_2<T_1<2T_2$ since then $\Vert e^{A_2T_2}e^{A_1T_1}\Vert = \Vert e^{A_2T_2+A_1T_1}\Vert <1$. This example can be generalized to a unidirectional graph with $k$ vertices and pairwise commuting subsystem matrices $A_1,\dots,A_k$. If there exist $T_1,\dots,T_k>0$ such that $\Vert e^{T_1A_1+\dots+T_kA_k}\Vert<1$, then the switched system is stable for some switching signal. In particular, if a convex combination of $A_1,\dots,A_k$ is Hurwitz, then the switched system is stabilized. This condition of existence of convex Hurwitz combination appears in quadratic stability of switched system via state dependent switching, see Liberzon~\cite{Lib}. Further if each $A_1,\dots,A_k$ is a diagonal matrix, then the switched system with unidirectional ring as the underlying graph is stable if and only if a convex combination of $A_1,\dots,A_k$ is Hurwitz.
\end{exam}

\subsection{Few Estimates when $\mathcal{E}_2(\G)\ne\emptyset$} We recall subsets of the edge set $\mathcal{E}(\G)$ defined in equation~(\ref{eq:e1e2}). Let the hypotheses of Theorem~\ref{thm:main} be satisfied for a choice of $P_1,\dots,P_k$ and intervals $I_{i}$, for each $e_i=(r,s)\in\mathcal{E}_2(\G)$. Let $s_1\dots,s_p$ be simple loops in $\G$ (recall Remark~\ref{rem_assump}(1)). For each $j=1,\dots,p$, let $\eta^j_{(r,s)}>0$ be the time that the system spends in each subsystem $r$ before switching to subsystem $s$ with $(r,s)\in \mathcal{E}(s_j)$. We assume that $\eta^j_{(r,s)}\in I_{i}$, for all $e_i=(r,s)\in \mathcal{E}(s_j)\cap\mathcal{E}_2(\G)$. For each $e_i=(r,s)\in \mathcal{E}(s_j)\cap\mathcal{E}_2(\G)$, let
\[
K^j_{(r,s)}=\sup\{\Vert P_{(r,s)}e^{t J_r}\Vert \ \vert \ t\in I_{i}\}<1. 
\] 

\subsubsection{Bound on the total time spent on edges in $\mathcal{E}_2(\G)$ on each simple loop} For $j=1,\dots,p$, let
\begin{eqnarray}
M_j &=& \sum_{(r,s)\in\mathcal{E}(s_j)\cap\mathcal{E}_2(\G)}\ln \Vert
P_{(r,s)}\Vert <0,\\
N_j &=& \sum_{(r,s)\in\mathcal{E}(s_j)\cap\mathcal{E}_1(\G)}\ln K^j_{(r,s)} < 0,\\
\lambda^j &=& \max_{(r,s)\in\mathcal{E}(s_j)\cap\mathcal{E}_2(\G)}\lambda_r.
\end{eqnarray}
Recall proof of Theorem~\ref{thm:main} and with notation as before, since every finite path in $\G$ can be decomposed into simple loops and a path of length at most $k-1$ in standard decomposition (described in Section~\ref{stddec}), we get
\begin{eqnarray*}
\ln a_n^\sigma &=& b_n^\sigma + \sum_{j=1}^p n_j^\sigma \left[ \sum_{(r,s)\in\mathcal{E}(s_j)}\ln\Vert P_{(r,s)}e^{J_r\eta^j_{(r,s)}}\Vert \right] \\
&\leq & b_n^\sigma + \sum_{j=1}^p n_j^\sigma \left[ M_j + \lambda^j\eta^j + N_j\right] 
\end{eqnarray*}
where $\eta^j = \sum_{(r,s)\in\mathcal{E}(s_j)\cap\mathcal{E}_2(\G)}\eta^j_{(r,s)}$. Since $(r,s)\in \mathcal{E}_2(\G)$, $\Vert P_{(r,s)}\Vert < 1$, there is no lower bound on $\eta^j_{(r,s)}>0$ (see also Remark~\ref{rem:hyp}(4)). Hence there is no lower bound on $\eta^j>0$. The term $b_n^\sigma$ corresponds to the indecomposable path in the standard decomposition. \\
As $n\rightarrow\infty$, for some $j=1,\dots,p$, the number $n_j^\sigma$ of simple loops tends to $\infty$. Moreover $b_n^\sigma$ is finite. Hence $\ln a_n^\sigma\rightarrow -\infty$ when for all $j=1,\dots,p$, $M_j + \lambda^j\eta^j + N_j<0$, which is true if $\lambda^j\leq 0$. If $\lambda^j>0$, $M_j + \lambda^j\eta^j + N_j<0$ provided
\[
0<\eta^j< -\dfrac{M_j+N_j}{\lambda^j}.
\]
Thus we have an upper bound on the total time spent on edges $(r,s)\in \mathcal{E}_2(\G)$ that lie on the simple loop $s_j$ in the standard decomposition. This gives a fast slow mechanism on these edges of the loop. \\
The bound described in this section is only applicable when $\mathcal{E}(s_j)\cap \mathcal{E}_2(\G)\ne \emptyset$. 

\subsubsection{Bound on the maximum time spent on edges in $\mathcal{E}_2(\G)$ on each simple loop}  
For $j=1,\dots,p$, let
\begin{eqnarray}
\gamma^j &=& \sum_{(r,s)\in\mathcal{E}(s_j)\cap\mathcal{E}_2(\G)}\lambda_r.
\end{eqnarray}
Recall proof of Theorem~\ref{thm:main} and with notation as before, since every path in $\G$ can be decomposed into simple loops and a path of length at most $k-1$ in standard decomposition (Section~\ref{stddec}), we get
\begin{eqnarray*}
\ln a_n^\sigma &=& b_n^\sigma + \sum_{j=1}^p n_j^\sigma \left[ \sum_{(r,s)\in\mathcal{E}(s_j)}\ln\Vert P_{(r,s)}e^{J_r\eta^j_{(r,s)}}\Vert \right] \\
&\leq & b_n^\sigma + \sum_{j=1}^p n_j^\sigma \left[ M_j + \gamma^j\zeta^j + N_j\right] 
\end{eqnarray*}
where $\zeta^j>0$ is the maximum time spent on each edge $(r,s)\in \mathcal{E}(s_j)\cap\mathcal{E}_2(\G)$.\\
As $n\rightarrow\infty$, for some $i=j,\dots,p$, the number $n_j^\sigma$ of simple loops tends to $\infty$. Moreover $b_n^\sigma$ is finite. Hence $\ln a_n^\sigma\rightarrow -\infty$ when for all $j=1,\dots,p$, $M_j + \gamma^j\zeta^j + N_j<0$, which is true if $\gamma^j\leq 0$. If $\gamma^j>0$, $M_j + \gamma^j\zeta^j + N_j<0$ provided
\[
0<\zeta^j< -\dfrac{M_j+N_j}{\gamma^j}.
\]
Thus we have an upper bound on the time spent on edges $(r,s)\in \mathcal{E}_2(\G)$ and the simple loop $s_j$. 

\subsection{Planar systems}\label{sec:planar}
In this section, we will focus on switched systems in $\mathbb{R}^2$. A matrix $A$ is called \emph{Schur stable} if $\rho(A)<1$. As an aside, Schur stability of a matrix $A$ is equivalent to the following: for each symmetric positive definite matrix $Q$, there exists a unique positive definite matrix $P$ such that $P-A^tPA-Q=0$, see~\cite{HJ}. For a matrix $A$ of size two, Schur stability of $A$ is equivalent to the following two conditions: $\vert \text{trace}(A)\vert <1+\text{det}(A)$ and $\vert \text{det}(A)\vert<1$, we refer to~\cite{KB}. Moreover, for a real matrix $A$, $\Vert A\Vert<1$ if and only if $A^tA$ is Schur stable.

\begin{exam}\label{exam:2}
Let $\mathcal{G}$ be a unidirectional ring with two vertices. Suppose both $A_1$ and $A_2$ are non-Hurwitz matrices of size two which are diagonalizable over $\mathbb{C}$. Every $\G$-admissible switching signal $\sigma$ switches between these two subsystems. Let $A_i=P_iJ_iP_i^{-1}$ be the Jordan decomposition of $A_i$, $i=1,2$. By Remark~\ref{rem:hyp}(3), without loss of generality, we can assume that $\Vert P_{(1,2)}\Vert\geq 1$. Further for the hypothesis $\Vert P_{(1,2)}e^{J_1t_0}\Vert<1$ of Theorem~\ref{thm:main} to be satisfied for some $t_0>0$, using Lemmas~\ref{lemma_1},~\ref{lemma_2} and $\Vert e^{J_1t_0}\Vert\geq 1$, we have $1>\Vert P_{(1,2)}e^{J_1t_0}\Vert\geq \Vert P_{(1,2)}\Vert \Vert e^{J_1t_0}\Vert\geq s_2(P_{1,2})=1/\Vert P_{(2,1)}\Vert$, hence $\Vert P_{(2,1)}\Vert>1$. Also, $\Vert P_{(2,1)} e^{J_2 s_0}\Vert \geq s_2(e^{J_2 s_0})$ and $\Vert P_{(1,2)} e^{J_1 s_0}\Vert \geq s_2(e^{J_1 s_0})$, by Lemma~\ref{lemma_2}. \\
Let us analyze various possibilities for the eigenvalues of $A_1$ and $A_2$. If $A_1$ has complex conjugate pair of eigenvalues $\lambda_1\pm i\mu_1$ with $\lambda_1\geq 0$ (since it is non-Hurwitz), then $\Vert P_{(1,2)} e^{J_1 s_0}\Vert \geq \sigma_2(e^{J_1 s_0})=e^{\lambda_1s_0}\geq 1$. Similarly for $A_2$. Hence for the hypothesis of Theorem~\ref{thm:main} to be satisfied, both $A_1$ and $A_2$ have a pair of real eigenvalues, one non-negative (since they are non-Hurwitz) and other negative (using Remark~\ref{rem:hyp}(5)). \\
Let $J_1=\text{diag}(-\alpha_1,\alpha_2)$ and $J_2=\text{diag}(-\beta_1,\beta_2)$, with $\alpha_1, \beta_1>0$, $\alpha_2, \beta_2\geq 0$. \\
The conditions in Theorem~\ref{thm:main} are:
\begin{eqnarray}\label{eq:4}
\Vert P_1^{-1}P_2 e^{J_2 s_0}\Vert<1,\text{ and } \Vert P_2^{-1}P_1 e^{J_1 t_0}\Vert<1,
\end{eqnarray}
for some $t_0, s_0>0$.
It should be noted that if inequalities~(\ref{eq:4}) are satisfied then
\begin{eqnarray*}
\Vert P_1^{-1}P_2 e^{J_2 s_0}P_2^{-1}P_1 e^{J_1 t_0}\Vert &=&\Vert P_1^{-1}e^{A_2 s}e^{A_1 t_0}P_1\Vert <1, \text{ and} \nonumber\\
\Vert P_2^{-1}P_1 e^{J_1 t_0}P_1^{-1}P_2 e^{J_2 s_0}\Vert &=&\Vert P_2^{-1}e^{A_1 t_0}e^{A_2 s_0}P_2\Vert <1.
\end{eqnarray*}
Further observe that $P_1=PD_1$ and $P_2=QD_2$, where $P$ and $Q$ are fixed matrices with all columns having unit norm and $D_1, D_2$ are diagonal matrices with all diagonal entries non-zero. Let $Q^{-1}P=(a_{ij})$. Then $P_2^{-1}P_1 = D_2^{-1}Q^{-1}PD_1$. If $D_1=\text{diag}(p,q)$ and $D_2=\text{diag}(r,s)$, then $P_2^{-1}P_1=\begin{pmatrix}
a_{11}p/r & a_{12}q/r\\
a_{21}p/s & a_{22}q/s
\end{pmatrix}$. The inequalities~(\ref{eq:4}) are satisfied if and only if all of the following conditions hold true: 
\begin{eqnarray}\label{eq:ineq}
T_1<1+D_1,\ D_1<1,\ T_2<1+D_2,\ \text{and } D_2<1,
\end{eqnarray} 
where  
\begin{eqnarray}\label{eq:TD}
T_1&=&e^{-2\alpha_1t_0}p^2\left(\dfrac{a_{11}^2}{r^2}+\dfrac{a_{21}^2}{s^2}\right)+e^{2\alpha_2t_0}q^2\left(\dfrac{a_{12}^2}{r^2}+\dfrac{a_{22}^2}{s^2}\right),\\
D_1&=&e^{2(\alpha_2-\alpha_1)t_0}\left(\dfrac{pq}{rs}(a_{11}a_{22}-a_{12}a_{21})\right)^2,\nonumber\\
T_2&=&\dfrac{e^{-2\beta_1s_0}\dfrac{1}{s^2}\left((a_{21}p)^2+(a_{22}q)^2\right)+e^{2\beta_2s_0}\dfrac{1}{r^2}\left((a_{11}p)^2+(a_{12}q)^2\right)}{\left(\dfrac{pq}{rs}(a_{11}a_{22}-a_{12}a_{21})\right)^2},\nonumber\\
D_2&=&\dfrac{e^{2(\beta_2-\beta_1)s_0}}{\left(\dfrac{pq}{rs}(a_{11}a_{22}-a_{12}a_{21})\right)^2}\nonumber.
\end{eqnarray}
Thus hypotheses of Theorem~\ref{thm:main} are equivalent to solving four inequalities $T_1<1+D_1$, $D_1<1$, $T_2<1+D_2$, and $D_2<1$ in six variables: positive $t_0,s_0$ and non-zero $p,q,r,s$. Further for planar switched system~(\ref{main}) with underlying graph $\G$ having $\ell$ edges, we need to solve $2\ell$ inequalities in $2k+\ell$ variables.\\
Weaker sufficient conditions can be obtained using Frobenius norm. Since the Frobenius norm $\Vert.\Vert_F$ is greater than the spectral norm, inequalities~(\ref{eq:4}) are satisfied if 
\begin{eqnarray}\label{eq:55}
\Vert P_1^{-1}P_2 e^{J_2 s_0}\Vert_F=T_2<1,\text{ and } \Vert P_2^{-1}P_1 e^{J_1 t_0}\Vert_F=T_1<1.
\end{eqnarray}
Note that this is possible only if 
\begin{eqnarray*}
e^{2\alpha_2t_0}q^2\left(\dfrac{a_{12}^2}{r^2}+\dfrac{a_{22}^2}{s^2}\right) &<& 1, \ \text{and} \\ e^{2\beta_2s_0}s^2\left((a_{11}p)^2+(a_{12}q)^2\right) &<& \left(pq(a_{11}a_{22}-a_{12}a_{21})\right)^2.
\end{eqnarray*}
 
\noindent If $A_1=J_1=\text{diag}(\alpha,\beta)$ and $A_2=J_2=\text{diag}(\gamma,\delta)$ are diagonal matrices, then $P=I$, $Q=I$, $P_2^{-1}P_1=\text{diag}(a,d)$ for some non-zero $a,d$. Hence inequalities~(\ref{eq:4}) are satisfied for some non-zero $a,d$ and $t,s>0$ if and only if  
\[
\max\{\vert a\vert e^{\alpha t}, \vert d\vert e^{\beta t}, e^{\gamma s}/\vert a\vert , e^{\delta s}/\vert d\vert \}<1.
\]
Note that this is satisfied provided either (i) $\alpha<0$, $\beta\geq 0$, $\gamma\geq 0$, and $\delta<0$, or (ii) $\alpha\geq 0$, $\beta< 0$, $\gamma< 0$, and $\delta\geq 0$. Let us assume (i) holds true (other case (ii) can be analyzed similarly). \\
If $\alpha<0\leq \beta$ and $\delta<0\leq \gamma$, it is easy to check that $a,d,t,s$ exist if and only if $\beta\gamma<\alpha\delta$ if and only if $A_1$ and $A_2$ have a Hurwitz convex combination. \\
Since $A_1$ and $A_2$ commute, this existence of a Hurwitz convex combination is a necessary and sufficient condition for stability, also see Example~\ref{exam:1}. \\
Observe that $D_1<1$ and $D_2<1$ implies $e^{2(\beta_2-\beta_1)s_0}<e^{2(-\alpha_2+\alpha_1)t_0}$, which is impossible for any positive $t_0,s_0$, if $\beta_2\geq \beta_1$ and $\alpha_2\geq\alpha_1$ (that is, if $\text{trace}(A_1)\geq 0$ and $\text{trace}(A_2)\geq 0$). Thus, we have the following proposition.

\begin{prop} \label{prop:trace}
For planar systems, if $s=(i_1,\dots,i_p,i_1)$ is a loop in $\G$ , then for the hypothesis of Theorem~\ref{thm:main} to satisfied, the trace of all the subsystem matrices $A_{i_1},\dots,A_{i_p}$, cannot be non-negative.  
\end{prop}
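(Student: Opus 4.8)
The plan is to pass from the full spectral-norm hypothesis of Theorem~\ref{thm:main} to a single scalar inequality per edge by taking determinants. The basic observation is that for a $2\times 2$ matrix $M$ one has $|\det M| = s_1(M)\,s_2(M) \le \Vert M\Vert^2$, so that $\Vert M\Vert < 1$ already forces $|\det M| < 1$. This is exactly the content of the conditions $D_1<1$ and $D_2<1$ isolated in the two-vertex discussion preceding the proposition, since a direct check gives $D_1 = |\det(P_{(1,2)}e^{J_1 t_0})|^2$ and $D_2 = |\det(P_{(2,1)}e^{J_2 s_0})|^2$; the proposition is the loop-level version of the remark that $D_1<1$ and $D_2<1$ are jointly unsatisfiable when both traces are non-negative.

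First I would fix the loop $s=(i_1,\dots,i_p,i_1)$ and argue by contradiction: suppose the hypothesis of Theorem~\ref{thm:main} holds, so for each edge $(i_j,i_{j+1})$ (indices taken cyclically mod $p$) there is $\eta_{(i_j,i_{j+1})}>0$ with $\Vert P_{(i_j,i_{j+1})}\,e^{\eta_{(i_j,i_{j+1})}J_{i_j}}\Vert < 1$. Applying the determinant bound to each factor and using that $\det\big(P_{(r,s)}e^{\eta J_r}\big) = \frac{\det P_r}{\det P_s}\,e^{\eta\,\text{trace}(A_r)}$, where $\text{trace}(J_r)=\text{trace}(A_r)$ by similarity, every edge yields the scalar inequality $\left|\frac{\det P_{i_j}}{\det P_{i_{j+1}}}\right| e^{\eta_{(i_j,i_{j+1})}\,\text{trace}(A_{i_j})} < 1$.

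The key step is to multiply these $p$ inequalities around the loop. The determinant ratios $\left|\det P_{i_j}/\det P_{i_{j+1}}\right|$ telescope cyclically to $1$, in precisely the same way the matrices $P_{(r,s)}$ multiply to the identity around a loop in Remark~\ref{rem:hyp}(3). What remains is
\[
\exp\!\left(\sum_{j=1}^p \eta_{(i_j,i_{j+1})}\,\text{trace}(A_{i_j})\right) < 1,
\quad\text{i.e.}\quad
\sum_{j=1}^p \eta_{(i_j,i_{j+1})}\,\text{trace}(A_{i_j}) < 0.
\]
Since every $\eta_{(i_j,i_{j+1})}>0$, this weighted sum cannot be negative if all of $\text{trace}(A_{i_1}),\dots,\text{trace}(A_{i_p})$ are non-negative; this contradiction proves that the traces along the loop cannot all be non-negative.

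I do not expect a genuine obstacle: the only two points needing care are the elementary bound $|\det M|\le\Vert M\Vert^2$ in the $2\times 2$ case and the cyclic cancellation of the determinant ratios. I would close with the remark that planarity is used only through $|\det M|<1$, which in fact follows from $\Vert M\Vert<1$ via $|\det M|=\prod_i s_i(M)\le\Vert M\Vert^n$ in every dimension; hence the same conclusion — that a loop of $\G$ cannot consist entirely of non-negative-trace subsystems under the hypothesis of Theorem~\ref{thm:main} — holds verbatim for switched systems in $\mathbb{R}^n$ for arbitrary $n$.
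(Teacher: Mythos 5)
Your proposal is correct and follows essentially the same route as the paper: pass to determinants via $\Vert M\Vert<1\Rightarrow|\det M|<1$, multiply the edge inequalities around the loop so the $\det P_{i_j}$ ratios telescope to $1$, and conclude $\sum_j\eta_{(i_j,i_{j+1})}\operatorname{trace}(A_{i_j})<0$, which is impossible if all traces are non-negative. Your derivation of $|\det M|\le\Vert M\Vert^2$ from the singular values is a slightly more self-contained justification than the paper's appeal to the preceding Schur-stability discussion, and your closing observation that $|\det M|\le\Vert M\Vert^n$ makes the proposition dimension-independent is a correct strengthening not stated in the paper.
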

\begin{proof}
For the hypothesis of Theorem~\ref{thm:main} to satisfied, $\Vert P_{(i_j,i_{j+1})}e^{\eta_{(i_j,i_{j+1})}J_{i_j}}\Vert<1$, for all $j=1,\dots,p$, with $\eta_{(i_j,i_{j+1})}>0$, $i_{p+1}=i_1$. Since the switched system is planar, from the preceding discussion, $\text{det}(P_{(i_j,i_{j+1})}e^{\eta_{(i_j,i_{j+1})}J_{i_j}})<1$, for all $j=1,\dots,p$. Taking a product of all the terms, we get 
\[
1>\prod_{j=1}^p\text{det}(P_{(i_j,i_{j+1})}) \text{det}(e^{\eta_{(i_j,i_{j+1})}J_{i_j}})=\prod_{j=1}^p e^{\eta_{(i_j,i_{j+1})}\text{trace}{J_{i_j}}},
\] 
since $\prod_{j=1}^p P_{(i_j,i_{j+1})}=I$. Since all $\eta_{(i_j,i_{j+1})}>0$, the above inequality is not satisfied when the trace of all the subsystem matrices $A_{i_1},\dots,A_{i_p}$, are non-negative.  
\end{proof}
\end{exam}

\section{Examples}\label{sec:exam}
\begin{exam}\label{Xiang_Xiao}
The following system taken from~\cite{WW} for highlighting a comparison of our results with existing literature. Consider a planar switched system with the underlying graph $\G$ as a unidirectional ring with edge set $\E(\G)=\{(1,2), (2,1)\}$, and subsystem matrices
\[
A_1=\begin{pmatrix}
-1.9&0.6\\ 0.6&-0.1
\end{pmatrix}, \ \ A_2=\begin{pmatrix}
0.1 & -0.9 \\ 0.1 & -1.4
\end{pmatrix}.
\] 
The system is used by authors of~\cite{WW} to illustrate Theorem 2 in their paper, which gives sufficient conditions for stability of a switched system with all unstable subsystems. The sufficient conditions in~\cite{WW} involves solving a large number of matrix inequalities, which is calculation intensive. We show that this system satisfies the hypothesis of our main Theorem~\ref{thm:main}. Since this system is planar, our sufficient conditions for stability just reduce to solving the set of four inequalities given in~(\ref{eq:ineq}), involving $T_1, T_2, D_1, D_2$, as described earlier.

\begin{figure}[h!]
\centering
\begin{subfigure}{.5\textwidth}
  \centering
  \includegraphics[width=.9\linewidth]{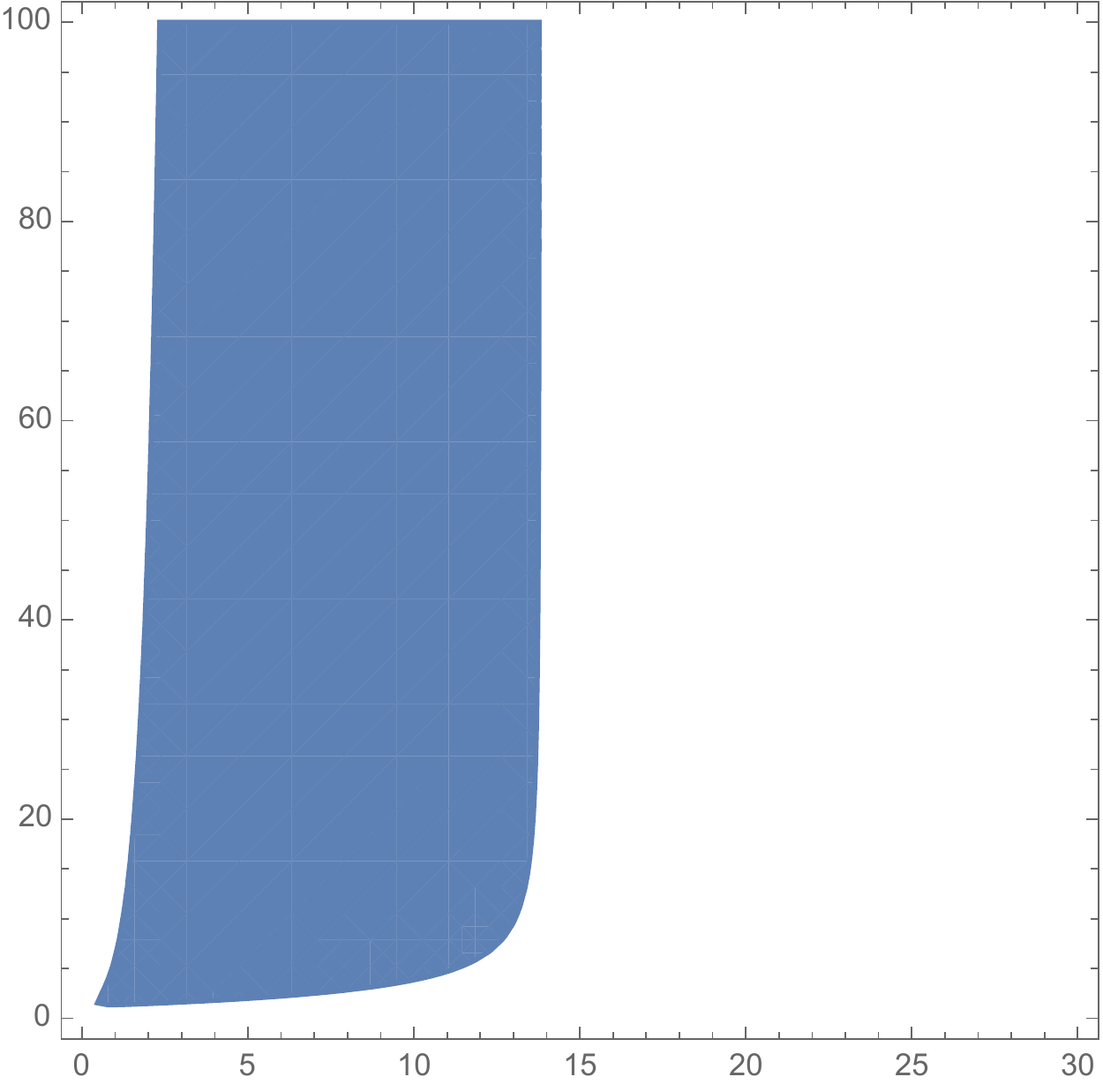}
  \caption*{(a)}
\end{subfigure}%
\begin{subfigure}{.5\textwidth}
  \centering
  \includegraphics[width=.9\linewidth]{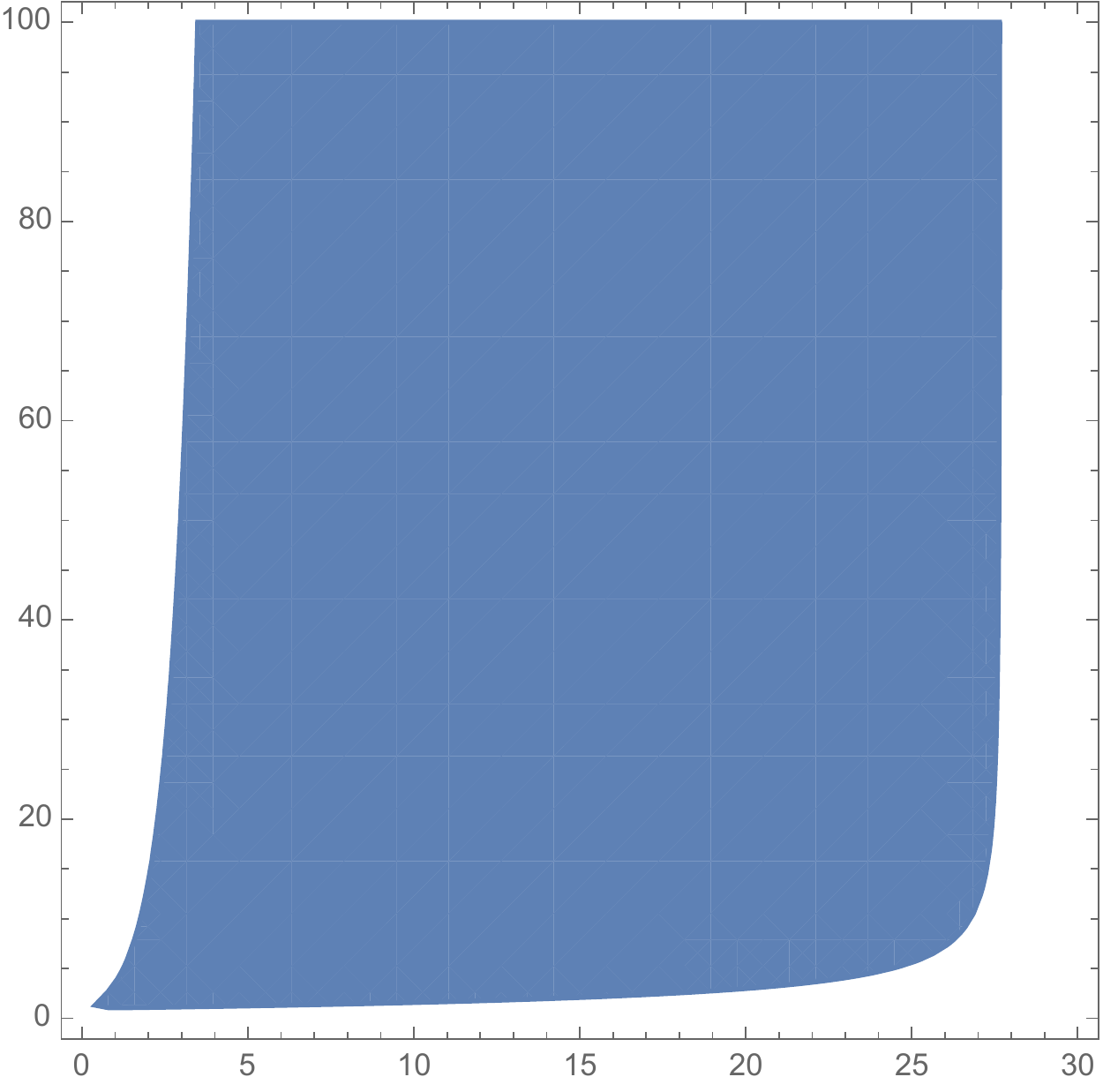}
    \caption*{(b)}
\end{subfigure}
\caption{Shaded region represents the values of $(t,x)$, where (a) $\Vert P_2^{-1}P_1e^{J_1t}\Vert < 1$, (b) $\Vert P_1^{-1}P_2e^{J_2t}\Vert < 1$.}
\label{xx_fig}
\end{figure}

\begin{figure}[h!]
\centering
\includegraphics[width=.5\textwidth]{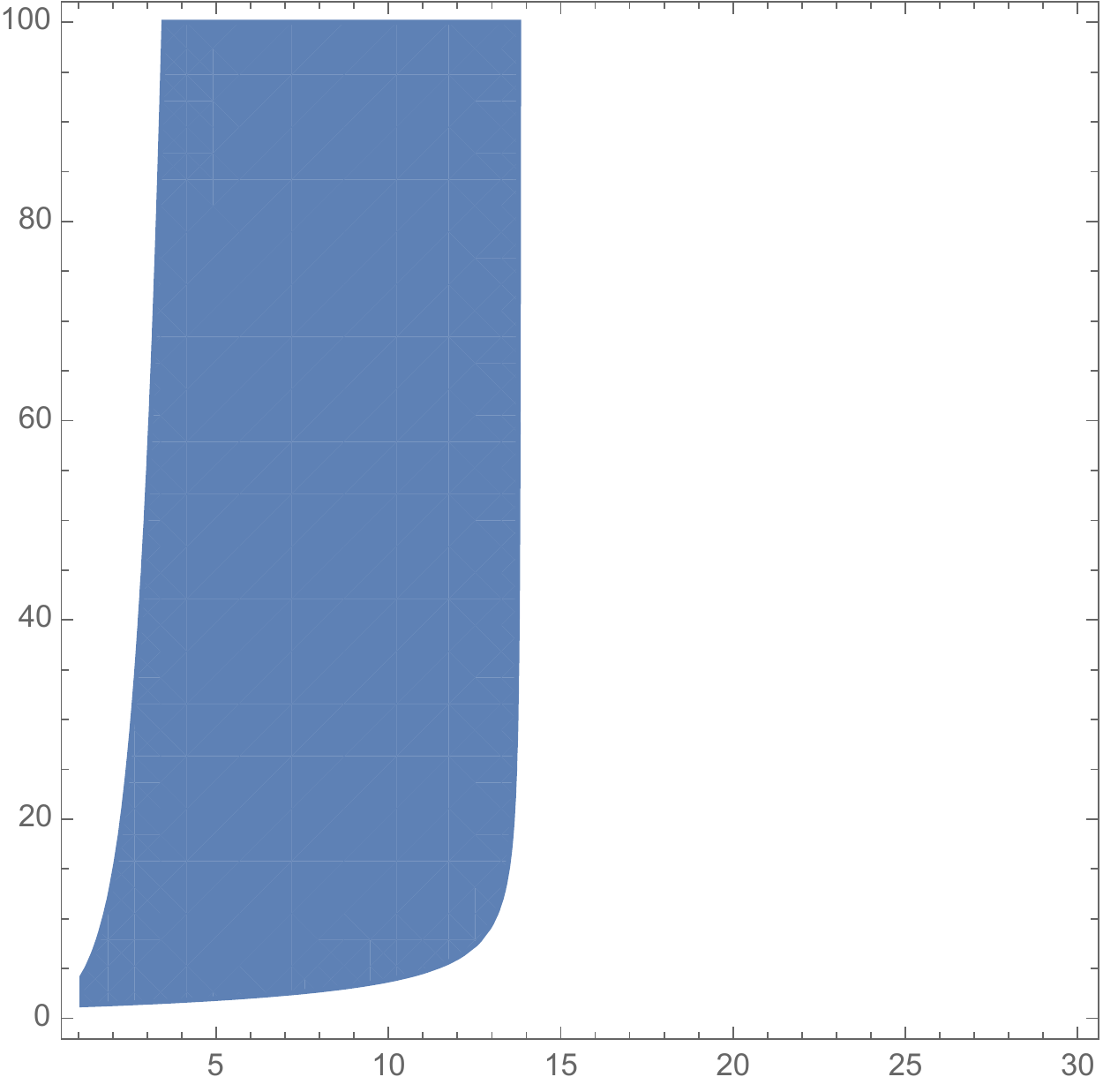}
\caption{Shaded region represents the values of $(t,x)$, where both $\Vert P_2^{-1}P_1e^{J_1t}\Vert < 1$ and $\Vert P_1^{-1}P_2e^{J_2t}\Vert < 1$.}
\label{xx_fig3}
\end{figure}

\noindent With the notation used in Example~\ref{exam:2}, we have 
\[
P=\begin{pmatrix}
-0.957092 & -0.289784 \\ 0.289784 &-0.957092
\end{pmatrix}, \ Q=\begin{pmatrix}
0.530691 & 0.997589 \\ 0.847565 & 0.069403
\end{pmatrix},
\]
\[
J_1=\begin{pmatrix}
-2.08167 & 0\\ 0 & 0.0816654
\end{pmatrix},\ J_2=\begin{pmatrix}
-1.33739 &0\\ 0 & 0.0373864
\end{pmatrix}.
\]
Assuming $D_1=D_2$ and setting $p/q=x$, equations~(\ref{eq:TD}) become
\begin{eqnarray}\label{eq:TD'}
T_1&=&e^{-2\alpha_1t_0}\left(a_{11}^2+a_{21}^2x^2\right)+e^{2\alpha_2t_0}\left(\dfrac{a_{12}^2}{x^2}+a_{22}^2\right),\\
D_1&=&e^{2(\alpha_2-\alpha_1)t_0}(a_{11}a_{22}-a_{12}a_{21})^2,\nonumber\\
T_2&=&\dfrac{e^{-2\beta_1s_0}\left(a_{21}^2x^2+a_{22}^2\right)+e^{2\beta_2s_0}\left(a_{11}^2+a_{12}^2x^2\right)}{(a_{11}a_{22}-a_{12}a_{21})^2},\nonumber\\
D_2&=&\dfrac{e^{2(\beta_2-\beta_1)s_0}}{(a_{11}a_{22}-a_{12}a_{21})^2}\nonumber.
\end{eqnarray}
Figures~\ref{xx_fig} and~\ref{xx_fig3} are plots in $(t,x)$-plane. For each value of $x$ (on the vertical axis), Figure~\ref{xx_fig}(a) shows the allowed values of $t$ (in the shaded region) for which $\Vert P_2^{-1}P_1e^{J_1t}\Vert < 1$, and Figure~\ref{xx_fig}(b) shows the allowed values of $t$ (in the shaded region) for which $\Vert P_1^{-1}P_2e^{J_2t}\Vert < 1$. Further, for each value of $x$ (on the vertical axis), Figure~\ref{xx_fig3} shows the allowed values of $t$ (in the shaded region) for which $\Vert P_2^{-1}P_1e^{J_1t}\Vert < 1$ and $\Vert P_1^{-1}P_2e^{J_2t}\Vert < 1$. From this, it is clear that the switched system is stable for all periodic signals $\sigma$ with $t_{n+1}-t_n=\tau$, for all $n\geq 0$, with any period $\tau$ between 2 and 13.
\end{exam}

\begin{exam}
\begin{figure}[h!]
\centering
\includegraphics[width=.7\textwidth]{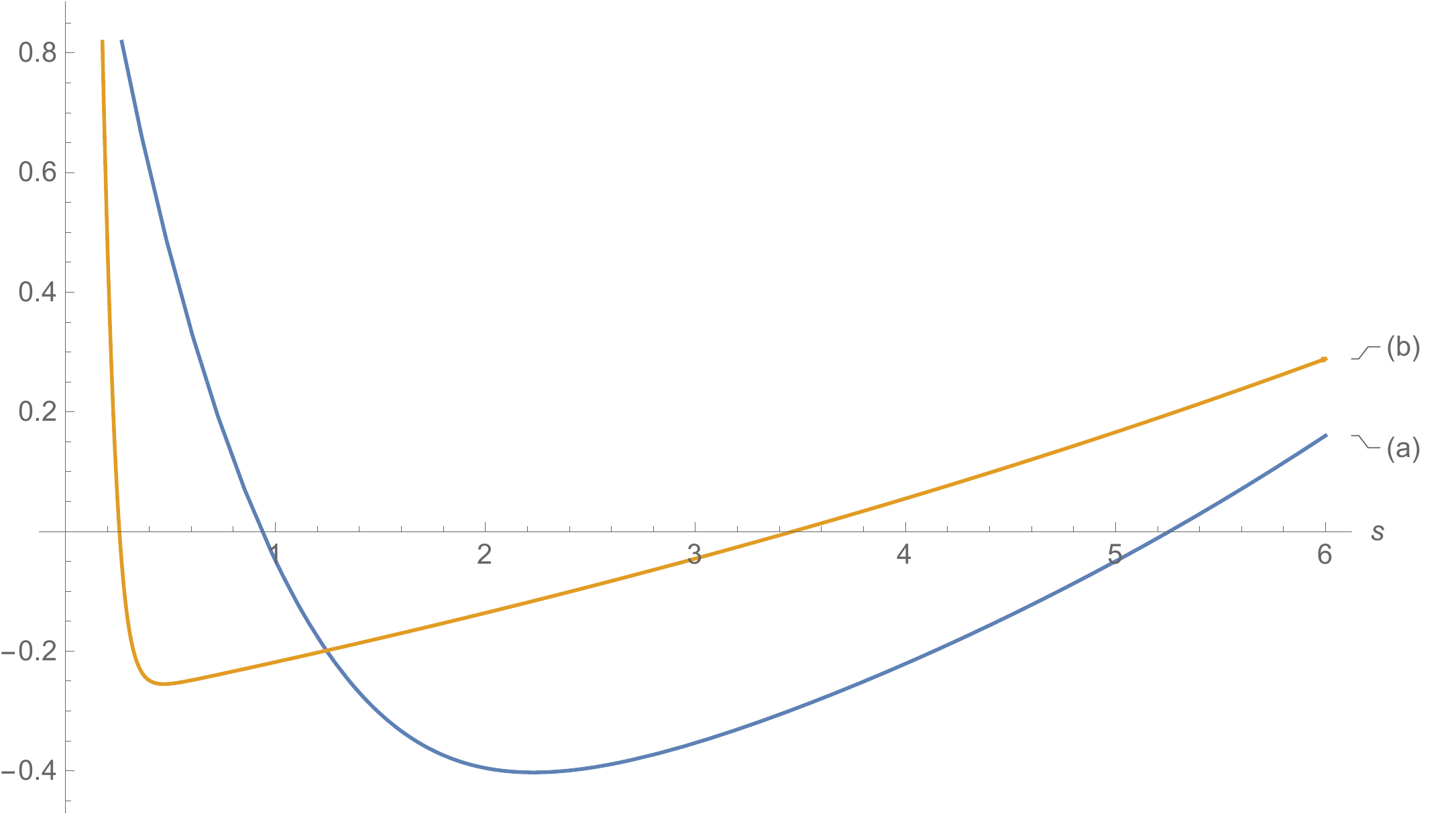}
\caption{(a) Plot of $\Vert P_{(1,2)}e^{J_1s}\Vert-1$ and (b) Plot of $\Vert P_{(2,1)}e^{J_2s}\Vert-1$.}
\label{exam1-fig4}
\end{figure}


\begin{figure}[h!]
\centering
\includegraphics[width=.7\textwidth]{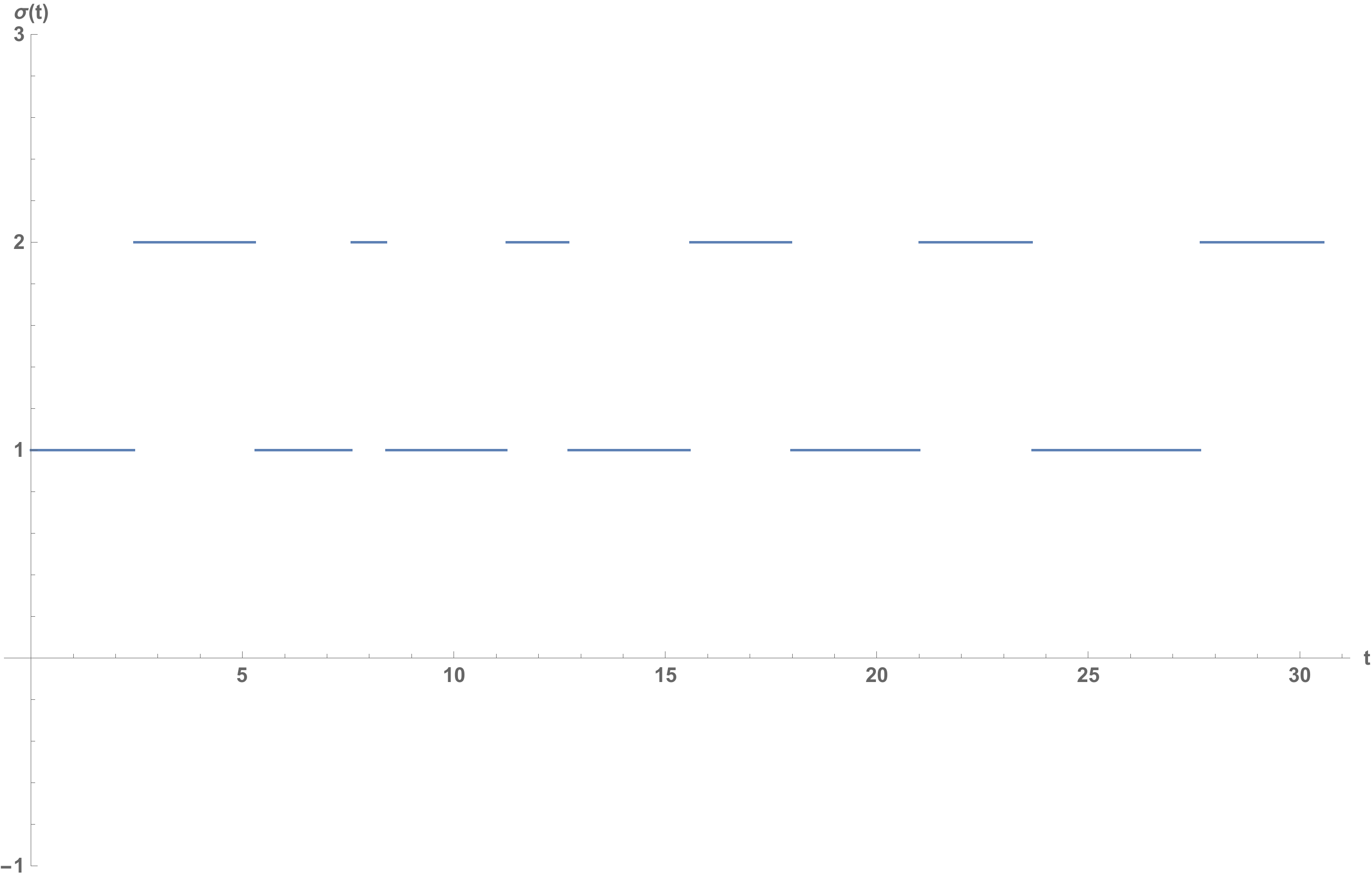}
\caption{Switching signal $\sigma$.}
\label{exam1-fig1}
\end{figure}

\begin{figure}[h!]
\centering
\includegraphics[width=.7\textwidth]{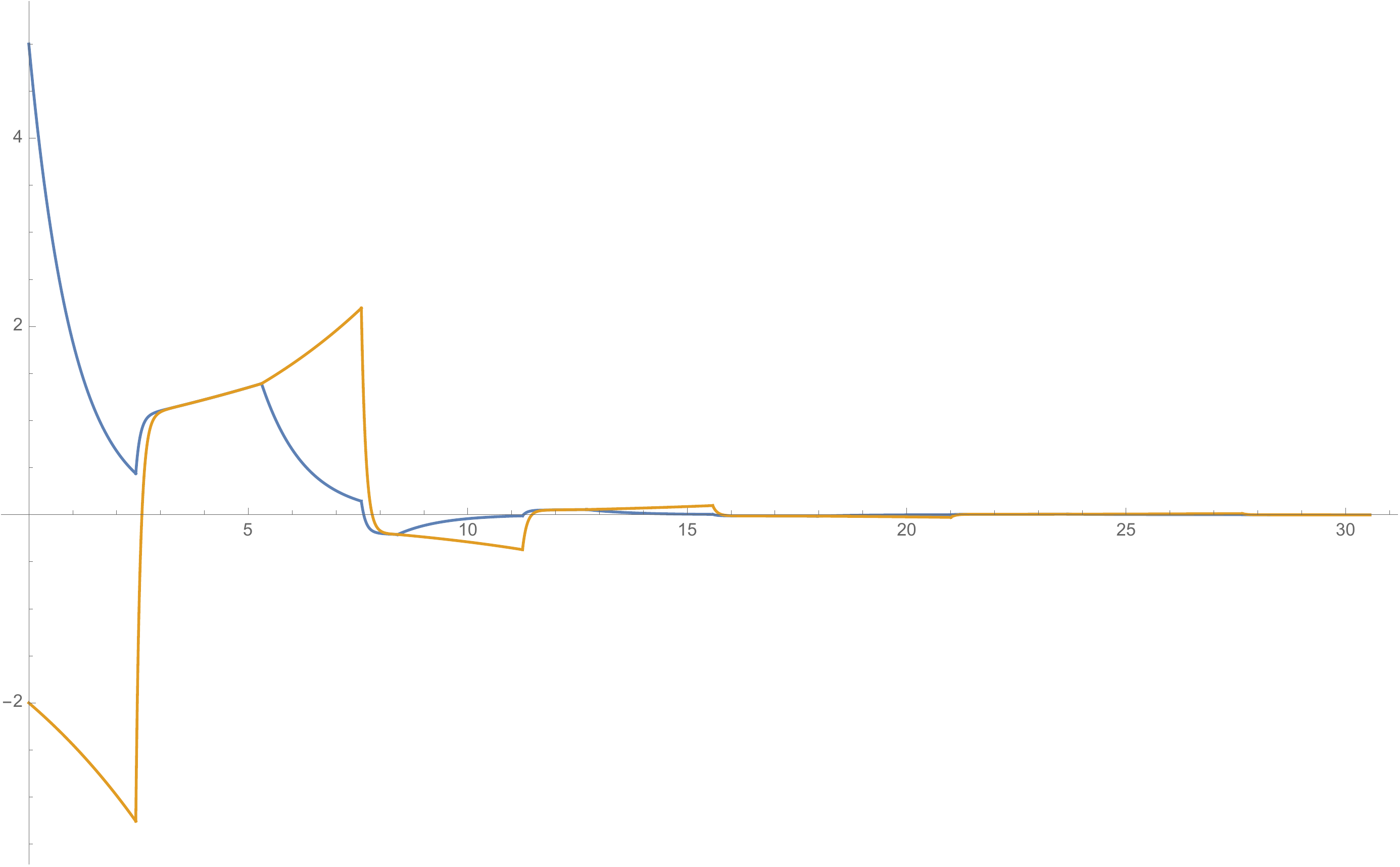}
\caption{Solution trajectories with initial condition $(5,-2)$ with switching signal $\sigma$.}
\label{exam1-fig2}
\end{figure}
Consider a planar switched system with underlying graph $\G$ as a unidirectional ring with edge set $\E(\G)=\{(1,2), (2,1)\}$, with subsystem matrices
\[
A_1=\begin{pmatrix}
-1&0\\ 0&0.2
\end{pmatrix},\ \ A_2=\begin{pmatrix}
1.76363 & -1.66363 \\ 11.7636& -11.6636
\end{pmatrix}. 
\]
Let $P_1=I$, $P_2=\begin{pmatrix}
\sqrt{2}&0.5\\ 10&0.5
\end{pmatrix}$, $J_2=\begin{pmatrix}
-10&0\\ 0&0.1
\end{pmatrix}$, $J_1=\begin{pmatrix}
-1&0\\ 0&0.2
\end{pmatrix}$. \\
For $s\in I_{(2,1)}=(0.5,3)$ and $t\in I_{(1,2)}=(1,4)$, we get $\Vert P_{(2,1)}e^{J_2s}\Vert <1$ and $\Vert P_{(1,2)}e^{J_1t}\Vert<1$, see Figure~\ref{exam1-fig4}. Note that the matrices $A_1$ and $A_2$ are non-commuting and there exists a Hurwitz convex combination of these matrices. Consider a switching signal $\sigma$, shown in Figure~\ref{exam1-fig1}, with randomly chosen first twelve switching times within the allowed interval range $I_{(1,2)}$ and $I_{(2,1)}$,
\begin{eqnarray*}
(2.43717, 2.86591, 2.27316, 0.826817, 2.84621, 1.46092, \\
2.87292,
2.39123, 3.033, 2.66629, 3.98035, 2.90419).
\end{eqnarray*}
Figure~\ref{exam1-fig2} shows the convergence of solution trajectory of the switched system with this switching signal $\sigma$.
\end{exam}

\begin{exam}
In these examples, hypotheses of Theorem~\ref{thm:main} are not satisfied if we take $P_i$ to have unit norm columns. Moreover inequalities~(\ref{eq:5}) are not satisfied for any choice of diagonal matrices.\\
a) Consider a planar switched system with underlying graph $\G$ as a unidirectional ring with edge set $\E(\G)=\{(1,2), (2,1)\}$, and subsystem matrices 
\[
A_1=\begin{pmatrix}
1&1\\ 3&0.4
\end{pmatrix}, \ A_2=\begin{pmatrix}
2&1\\ 0.1&-0.6
\end{pmatrix}.
\]
Both $A_1$ and $A_2$ are non-commuting unstable matrices with positive trace. Moreover, no convex combination of these matrices is Hurwitz. \\
b) Consider a planar switched system with underlying graph $\G$ with edge set $\E(\G)=\{(1,2), (1,4), (2,3), (3,1), (4,1)\}$, and subsystem matrices 
\[
A_1=\begin{pmatrix}
1&-1\\ 1&1
\end{pmatrix}, \ A_2=A_3=\begin{pmatrix}
2&1\\0&-3
\end{pmatrix}, \ A_4=\begin{pmatrix}
4&-1\\-1&-3
\end{pmatrix}.
\]
All of the subsystem matrices are unstable and the matrices $A_1$ and $A_4$ have positive trace. Hence, by Proposition~\ref{prop:trace}, the switched system does not satisfy the hypothesis of Thereom~\ref{thm:main}.  
\end{exam}

\begin{exam}
\begin{figure}[h!]
\centering
\includegraphics[width=.8\textwidth]{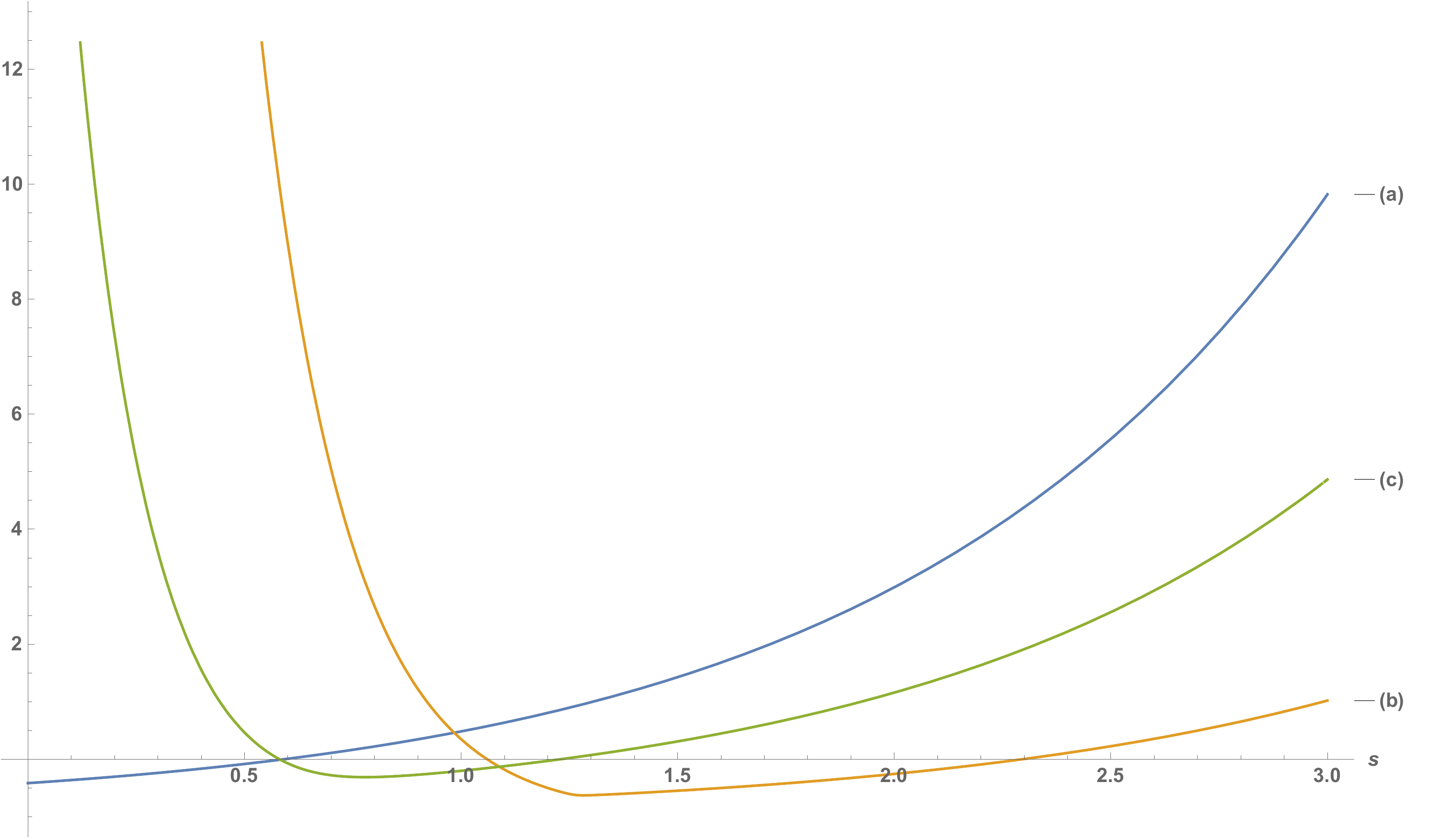}
\caption{(a) Plot of $\Vert P_{(1,2)} e^{J_1s}\Vert-1$, (b) Plot of $\Vert P_{(2,3)} e^{J_2s}\Vert-1$, and (c) Plot of $\Vert P_{(3,1)} e^{J_3s}\Vert-1$.}
\label{exam2-fig1}
\end{figure}
Consider a unidirectional ring $\G$ with three vertices and let
\[
A_1=\begin{pmatrix}
1&0\\ 0.9&0.1
\end{pmatrix}, \ A_2=\begin{pmatrix}
0.538462 &1.38462 \\ 1.84615 & -4.53846
\end{pmatrix}, \ A_3=\begin{pmatrix}
26.8725 & -98.6387\\ 8.62228& -31.8725
\end{pmatrix}.
\]
Here $J_1=\text{diag}(1,0.1)$, $J_2=\text{diag}(-5,1)$, $J_3=\text{diag}(1,-6)$.\\
See Figure~\ref{exam2-fig1}, hypothesis of Theorem~\ref{thm:main} is satisfied for 
\[
P_1=\begin{pmatrix}
1 &  0 \\ 1& 1
\end{pmatrix}, \ P_2=\begin{pmatrix}
-0.769231 & 2.30769 \\ 3.07692 & 0.769231
\end{pmatrix}, \ P_3=\begin{pmatrix}
-0.23485 & 23.1004 \\ -0.0616001 & 7.69847
\end{pmatrix}.
\] 
Here $\Vert P_{(1,2)}\Vert<1$, $\Vert P_{(2,3)}\Vert>1$ and $\Vert P_{(3,1)}\Vert>1$.

\end{exam}

\section{Concluding Remarks}\label{conclusion}
In this paper, we have given sufficient stability conditions for switched systems, which are particularly useful for switched systems with all non-Hurwitz subsystems. Several examples are given to illustrate the applicability of our result. Even though it is easy to check when the hypothesis of Theorem~\ref{thm:main} are not valid using Remark~\ref{rem:hyp}(5) and Proposition~\ref{prop:trace}, it is not straightforward to find sufficient conditions only in terms of the subsystem matrices $A_1,\dots,A_k$ and the architecture of the underlying graph $\G$, under which the hypotheses hold true. For planar systems, hypotheses of Theorem~\ref{thm:main} can be reduced to simple computable conditions as discussed in Example~\ref{exam:2} and also in Proposition~\ref{prop:trace}. Example~\ref{Xiang_Xiao} provides a comparison of our result with the existing result in the literature. An analytical comparison of the sufficient conditions presented here with the conditions available in the literature is an ongoing project. Further applicability of our results to large scale systems and estimating computation costs can be explored.

\section{Funding}
This work was funded by Science Engineering Research Board, Department of Science and Technology, India (File No. YSS/2014/000732).

\bibliographystyle{plain}
\bibliography{mybibfile}

\end{document}